\documentclass{amsart}


\usepackage{amssymb,amsmath,array,multirow,makecell,blindtext,amsthm,enumitem,mathtools,amscd,tikz-cd,amsrefs,dsfont,hyperref,url,caption,float,placeins}
\usepackage[font=small, labelfont=bf]{caption}
\hypersetup{
    colorlinks=true,
    linkcolor=blue,
    citecolor=black,
    filecolor=magenta,      
    urlcolor=cyan,
}

\usepackage[all]{xy}
\usepackage{graphicx}
\usetikzlibrary{positioning}
\usepackage{caption}

\theoremstyle{plain}
\newtheorem{theorem}{Theorem}[section]
\newtheorem{corollary}[theorem]{Corollary}
\newtheorem{lemma}[theorem]{Lemma}
\newtheorem{remark}[theorem]{Remark}
\newtheorem{proposition}[theorem]{Proposition}
\newtheorem{definition}[theorem]{Definition}
\newtheorem{question}[theorem]{Question}

\newtheorem{example}[theorem]{Example}

\newcommand{\Z}{\mathbb{Z}}
\newcommand{\Q}{\mathbb{Q}}

\newcommand{\Oo}{\mathcal{O}}
\newcommand{\Gg}{\mathcal{G}}

\newcommand{\Gal}{\operatorname{Gal}}
\newcommand{\SL}{\operatorname{SL}}
\newcommand{\sgn}{\operatorname{sgn}}
\newcommand{\Ent}{\operatorname{Ent}}
\newcommand{\ord}{\operatorname{ord}}
\newcommand{\Aut}{\operatorname{Aut}}
\newcommand{\im}{\operatorname{im}}

\newcommand{\GL}{\operatorname{GL}}
\newcommand{\nf}{\normalfont}
\newcommand{\Mod}[1]{\ \mathrm{mod}\ #1}

\title[Entanglement of elliptic curves upon base extension]{Entanglement of elliptic curves upon \\ base extension}

\author{Tori Day and Rylan Gajek-Leonard}

\address[]{Department of Mathematics, Mount Holyoke College, South Hadley, MA}
\email[Tori Day]{tori.day@mtholyoke.edu}

\address[]{Department of Mathematics, Union College, Schenectady, NY}
\email[Rylan Gajek-Leonard]{gajekler@union.edu}

\subjclass[2020]{Primary 11G05; Secondary 11F80}
\keywords{elliptic curves, entanglement, Galois representations}

\begin{document}

\begin{abstract}   Fix distinct primes $p$ and $q$ and let $E$ be an elliptic curve defined over a number field $K$. The $(p,q)$-entanglement type of $E$ over $K$ is the isomorphism class of the group $\Gal(K(E[p])\cap K(E[q])/K)$. The size of this group measures the extent to which the image of the mod $pq$ Galois representation attached to $E$ fails to be a direct product of the mod $p$ and mod $q$ images. In this article, we study how the $(p,q)$-entanglement group varies over different base fields. We prove that for each prime $\ell$ dividing the greatest common divisor of the size of the mod $p$ and $q$ images, there are infinitely many fields $L/K$ such that the entanglement over $L$ is cyclic of order $\ell$. We also classify all possible $(2,q)$-entanglement types that can occur as the base field $L$ varies. 
\end{abstract}

\maketitle

\section{Introduction}

Let $E$ be an elliptic curve defined over a number field $K$. For any integer $n\geq 2$,  the action of $G_K=\Gal(\bar K /K)$ on the $n$-torsion points $E[n]$ yields a representation
$$
\rho_{E,n}:G_K \rightarrow \Aut(E[n])\cong \GL_2(\Z/n\Z)
$$
whose image is well-defined up to conjugation. A classical problem, often referred to as Mazur's `Program B' \cite{Maz77}, is to determine all possible images of this representation.  When $n=p$ is prime and $E$ does not have complex multiplication, a well-known theorem of Serre \cite{Ser72} asserts that $\rho_{E,p}$ is surjective for all but finitely many \emph{exceptional} primes $p$. Recent work of Zywina \cite{Zyw15} classifies the possible images of $\rho_{E,p}$ for rational elliptic curves at exceptional primes. 

When $n$ is composite, the problem of classifying images becomes even more subtle. This is due to the fact that it is not always possible to determine the image mod $n$ solely in terms of the images modulo the prime factors of $n$. In general, one only has an injection 
$$
\im\rho_{E,n}\hookrightarrow\prod_{p\mid n}\im\rho_{E,p^{\ord_p(n)}}.
$$
The failure of surjectivity in the above map can be attributed to coincidental intersections, or \emph{entanglements}, between various torsion fields at primes (or prime powers) dividing $n$. 

For example, if $n=pq$ is the product of distinct primes $p$ and $q$ then the index of $\im\rho_{E,pq}$ inside $\im\rho_{E,p}\times\im\rho_{E,q}$ is equal to the degree
$$
[K(E[p])\cap K(E[q]):K],
$$
where the $p$ and $q$-torsion fields $K(E[p])$ and $K(E[q])$ are (by definition) those fixed by the kernels of $\rho_{E,p}$ and $\rho_{E,q}$, respectively. If this degree is nontrivial, $E$ is said to have \emph{$(p,q)$-entanglement over $K$}. 
Thus, $(p,q)$-entanglements occur precisely when the mod $pq$ image fails to be the full direct product of the mod $p$ and mod $q$ images.  The entanglement \emph{type} is the isomorphism class of the Galois group 
$$
\Gal(K(E[p])\cap K(E[q])/K),
$$
which we denote $T_{p,q}(E/K)$. Knowing the entanglement type, together with the mod $p$ and mod $q$ image, allows one to compute the full image mod $pq$.

 In this article, we consider the behavior of entanglements along base extensions. Specifically, we ask the following question:
\begin{question}\label{OurQ} \it 
For a fixed elliptic curve $E/K$ and distinct primes $p$ and $q$, what groups $T_{p,q}(E/L)$ occur as we vary over base extensions $L/K$?\nf
\end{question}
 
\noindent  Our focus is therefore on the set 
\begin{equation}\label{entset}
\Ent_{p,q}(E/K)=\{T_{p,q}(E/L) \mid \text{$L/K$} \}.
\end{equation}
This set is finite and consists of finite groups (even when $L/K$ is an infinite extension), since for any $L/K$ we have (see Proposition \ref{sizebound}) 
 \begin{equation}\label{div1}
 \#T_{p,q}(E/L)\,\bigg|\,\, \gcd(\#\im\rho_{E,p},\#\im\rho_{E,q}).
 \end{equation}
It is easy to see that $\Ent_{p,q}(E/K)$ always contains the trivial group: if $L$ is any field containing $K(E[pq])$ then $T_{p,q}(E/L)=1$. For arbitrary $L/K$, the possibilities for $(p,q)$-entanglement are less obvious. (For example, the entanglement can both increase and decrease in size over different base fields -- see Examples \ref{quadex} and Examples \ref{ex-2in3}).
To ease notation, we define
$$ d(E,p,q)=\gcd(\#\im\rho_{E,p},\#\im\rho_{E,q}).
$$
Our main theorem, stated as follows (see also Theorem \ref{main1}), shows that in addition to the trivial group, the set $\Ent_{p,q}(E/K)$ contains
all cyclic groups of prime order dividing $ d(E,p,q)$, i.e., it contains the maximal number of prime-order cyclic groups permissible by \eqref{div1}. 

\begin{theorem}\label{main} 
Let $\ell$ be a prime dividing $d(E,p,q)$. Then there exists a field $L\subseteq K(E[pq])$ such that
$$
T_{p,q}(E/L)= \Z/\ell\Z.
$$
\end{theorem}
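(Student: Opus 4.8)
The plan is to reformulate the statement purely group-theoretically and then construct $L$ as a fixed field inside $K(E[pq])$. Write $G=\im\rho_{E,pq}$ and regard it as a subgroup of $A\times B$, where $A=\im\rho_{E,p}$ and $B=\im\rho_{E,q}$, via the isomorphism $\GL_2(\Z/pq\Z)\cong \GL_2(\Z/p\Z)\times\GL_2(\Z/q\Z)$; let $\pi_A\colon G\to A$ and $\pi_B\colon G\to B$ be the (surjective) coordinate projections. For any subgroup $H\le G$ the intermediate field $L=K(E[pq])^H$ satisfies $L(E[pq])=K(E[pq])$, so $\im\rho_{E/L,p}=\pi_A(H)$ and $\im\rho_{E/L,q}=\pi_B(H)$, and Goursat's lemma identifies the entanglement $T_{p,q}(E/L)$ with the common quotient $\pi_A(H)/\pi_A(H\cap\ker\pi_B)\cong\pi_B(H)/\pi_B(H\cap\ker\pi_A)$. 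Thus it suffices to exhibit a single subgroup $H\le G$ whose associated Goursat quotient is cyclic of order $\ell$.

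To find such an $H$, I would pass to a Sylow $\ell$-subgroup $P\le G$. Since $\pi_A$ is surjective with kernel $N$, the image $\pi_A(P)=PN/N$ is a Sylow $\ell$-subgroup of $A$, and likewise $\pi_B(P)$ is a Sylow $\ell$-subgroup of $B$; because $\ell\mid d(E,p,q)$ divides both $\#A$ and $\#B$, these images are nontrivial. I then claim $P$ contains an element $g$ with $\pi_A(g)\neq 1$ and $\pi_B(g)\neq 1$. Indeed, $P\cap\ker\pi_A$ and $P\cap\ker\pi_B$ are subgroups of $P$, and if every element of $P$ lay in one of them we would have $P=(P\cap\ker\pi_A)\cup(P\cap\ker\pi_B)$; since no group is the union of two proper subgroups, one of these would equal $P$, forcing $\pi_A(P)=1$ or $\pi_B(P)=1$, a contradiction.

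Given such a $g$, both $\pi_A(g)$ and $\pi_B(g)$ have $\ell$-power order, say $\ell^a$ and $\ell^b$ with $a,b\ge 1$; set $c=\min(a,b)$ and take $H=\langle g^{\ell^{c-1}}\rangle$. The element $g^{\ell^{c-1}}$ still has nontrivial image under both projections, of orders $\ell^{a-c+1}$ and $\ell^{b-c+1}$, one of which is exactly $\ell$. A direct Goursat computation for this cyclic $H$ (using that $H$, $\pi_A(H)$, and $\pi_B(H)$ are all cyclic $\ell$-groups) shows the common quotient has order $\ell^{\min(a-c+1,\,b-c+1)}=\ell$, so $T_{p,q}(E/L)\cong\Z/\ell\Z$ for $L=K(E[pq])^H$, as desired.

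The genuine obstacle is the coordination between the two projections. Even though $\ell$ divides both $\#A$ and $\#B$, the groups $A$ and $B$ need not admit cyclic quotients of order $\ell$: for instance when $\ell=p$ and $A=\GL_2(\F_p)$, the abelianization of $A$ is $\F_p^\times$, of order prime to $p$, so $A$ has no quotient of order $\ell$ at all, and one cannot realize the entanglement by quotienting $A$ and $B$ and matching them up. The Sylow argument sidesteps this by producing an element realizing $\ell$-torsion simultaneously in both coordinates, and the adjustment $g\mapsto g^{\ell^{c-1}}$ guarantees the resulting entanglement has order exactly $\ell$ rather than a higher power. The remaining verifications — that $L\subseteq K(E[pq])$, the Sylow-image fact, and the cyclic Goursat computation — are routine.
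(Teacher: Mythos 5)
Your proof is correct, and it takes a genuinely different route from the paper's. The paper (Theorem \ref{main1}) splits into two cases: if $\ell\mid[K_{p,q}:K]$ it simply passes to a subfield of the entanglement field of index $\ell$ (Proposition \ref{prop-subshrink}); otherwise it identifies $\Gal(K(E[p])K(E[q])/K)$ with a fibre product over $\Gal(K_{p,q}/K)$, applies Cauchy's theorem in the two quotients $\Gal(K(E[p])/K_{p,q})$ and $\Gal(K(E[q])/K_{p,q})$ to produce an element $(g_1,g_2)$ of order exactly $\ell$, and takes $L$ to be the fixed field of $\langle(g_1,g_2)\rangle$, concluding via the entangleable-fields machinery of \S\ref{entfieldsec} (Propositions \ref{entdeg} and \ref{entprop1}). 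You instead argue uniformly, with no case split, inside $G=\im\rho_{E,pq}\leq A\times B$: a Sylow $\ell$-subgroup $P$ of $G$ projects onto Sylow $\ell$-subgroups of $A$ and $B$, the fact that no group is a union of two proper subgroups yields $g\in P$ with both coordinates nontrivial, and the power $g^{\ell^{c-1}}$ generates a cyclic $H$ whose Goursat quotient $H/(H\cap\ker\pi_A)(H\cap\ker\pi_B)$ --- which is exactly the paper's group-theoretic description of $T_{p,q}(E/L)$ from \S 2, applied over $L=K(E[pq])^{H}$ --- has order $\ell$. The constructions differ in substance: the paper's subgroup has order exactly $\ell$ and meets both kernels trivially, so the entire degree $[LF_i:L]=\ell$ is entangled, whereas your $H$ may be a strictly larger cyclic $\ell$-group, with the order-$\ell$ entanglement extracted only by the quotient computation; and your use of Sylow theory inside $G$ replaces the paper's use of Cauchy in the quotients by $K_{p,q}$, which is precisely what lets you absorb the paper's first case into the general argument. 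What the paper's route buys is reusable machinery (its entangleable-fields results feed into Theorems \ref{ent-comp} and \ref{infmanyfields}); what yours buys is a shorter, self-contained statement about arbitrary subgroups of $A\times B$ with surjective projections. The verifications you defer are indeed routine: the Goursat identification of $T_{p,q}(E/L)$ follows from the Galois correspondence as in the paper's group-theoretic formulation, the Sylow image fact is standard, and the cyclic computation works because $\ord(g)=\lcm\big(\ord\pi_A(g),\ord\pi_B(g)\big)$ for $g\in G\leq A\times B$, so with $c=\min(a,b)$ one kernel intersection is $\langle g^{\ell^{c-1}\cdot\ell}\rangle$ and the other is trivial, giving a quotient of order exactly $\ell$.
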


For an extension $L/K$, let $\rho_{E/L,n}$ denote the restriction of $\rho_{E,n}$ to the subgroup $\Gal(\bar K/L)\subseteq G_K$. Theorem \ref{main} immediately implies the following corollary.

\begin{corollary} For each $\ell$ as in Theorem \ref{main}, there is a field $L/K$ such that the image of $\rho_{E/L,pq}$ contains a normal subgroup of index $\ell$. 
\end{corollary}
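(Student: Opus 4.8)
The plan is to translate Theorem \ref{main} into the language of Galois groups of torsion fields and then apply the Galois correspondence. First I would recall that for any extension $L/K$ the image of the restricted representation $\rho_{E/L,pq}$ is canonically isomorphic to $\Gal(L(E[pq])/L)$, since $L(E[pq])$ is by definition the field fixed by the kernel of $\rho_{E/L,pq}$. Writing $G=\Gal(L(E[pq])/L)$, the claim then becomes the purely group-theoretic statement that $G$ possesses a normal subgroup of index $\ell$.

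Next I would invoke Theorem \ref{main} to produce a field $L\subseteq K(E[pq])$ (in particular, an extension $L/K$) for which $T_{p,q}(E/L)=\Gal\big(L(E[p])\cap L(E[q])/L\big)\cong\Z/\ell\Z$. Set $M=L(E[p])\cap L(E[q])$. The key structural observation is that $M/L$ is Galois: it is the intersection, inside the common field $L(E[pq])$, of the two Galois extensions $L(E[p])/L$ and $L(E[q])/L$, and the intersection of Galois extensions is again Galois over the base. By the very definition of the entanglement type we then have $\Gal(M/L)=T_{p,q}(E/L)\cong\Z/\ell\Z$.

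Finally I would apply the Galois correspondence to the extension $L(E[pq])/L$. Since $M$ is an intermediate field with $M/L$ Galois, the subgroup $H=\Gal(L(E[pq])/M)$ is normal in $G$, and the restriction map induces
$$
G/H \;\cong\; \Gal(M/L) \;\cong\; \Z/\ell\Z.
$$
Hence $H$ is normal in $G$ of index $\ell$, and transporting $H$ through the isomorphism $G\cong\im\rho_{E/L,pq}$ yields a normal subgroup of index $\ell$ in the image of $\rho_{E/L,pq}$, as required.

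I expect there to be no genuine obstacle: all of the arithmetic content is already carried by Theorem \ref{main}, and the corollary is a Galois-theoretic repackaging of it. The only points meriting care are the standard dictionary between the intermediate fields of $L(E[pq])/L$ and the subgroups of the matrix image $\im\rho_{E/L,pq}$, and the observation that $M/L$ being Galois is precisely what forces the corresponding subgroup $H$ to be normal; the identification $\Gal(M/L)\cong\Z/\ell\Z$ then supplies the index directly.
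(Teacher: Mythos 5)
Your proof is correct and matches the paper's intended reasoning: the paper states this corollary as an immediate consequence of Theorem \ref{main}, the point being exactly that $T_{p,q}(E/L)\cong\Gal\bigl(L(E[p])\cap L(E[q])/L\bigr)$ is a quotient of $\im\rho_{E/L,pq}\cong\Gal(L(E[pq])/L)$ by a normal subgroup (in the paper's group-theoretic formulation, the quotient by $\langle\ker\pi_p\cap\Gg,\ker\pi_q\cap\Gg\rangle$). Your Galois-correspondence argument simply spells out this "immediate" step, so no further comment is needed.
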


\begin{example}\label{quadex}  \nf  Let $E/\Q$ be the elliptic curve with Cremona label \href{https://www.lmfdb.org/EllipticCurve/Q/1323/e/1}{\texttt{1323p1}}. One can check that $\rho_{E,2}$ is surjective while $\im\rho_{E,3}$ has index 3 in $\GL_2(\Z/3\Z)$, hence 
$$
d(E,2,3)=2.
$$
It follows from \eqref{div1} and Theorem \ref{main} that the only $(2,3)$-entanglement types that occur for $E$ (over any base field) are trivial entanglement and $\Z/2\Z$-entanglement, i.e., 
$$
\Ent_{2,3}(E/\Q)=\{1,\Z/2\Z\}.
$$
 In fact, by using the subfield structure of $\Q(E[2])$ and $\Q(E[3])$ (see the diagram below), we see that
 $$
 T_{2,3}(E/\Q)=1\qquad \text{and}\qquad T_{2,3}(E/L)=\Z/2\Z,
 $$
 for $L\in \{\Q(\sqrt{-1}),\Q(\sqrt{3}),\Q(\sqrt{7})\}$.
 The rightmost diagram, wherein $L=\Q(\sqrt{-1})$, illustrates the idea that entanglement can increase in size by `glueing' together various subfields of the $p$ and $q$-torsion fields. This idea is explored in detail in \S\ref{entfieldsec}. Note also that there are \emph{multiple} fields $L\subseteq \Q(E[6])$ which achieve $\Z/2\Z$-entanglement.  This has to do with the fact that the prime $q=3$ is exceptional for $E$ -- see Remark \ref{quadexfull} for a more complete discussion.  
 
\begin{minipage}{.15\textwidth}
\begin{tikzpicture}[node distance = 1.5cm, auto]
\hspace{-0.7cm}
     \node (1) {$\Q(E[2])$};
     \node (2)[right=of 1]   {$\Q(E[3])$};
     \node (3)[below=of 1]  {$\Q(\sqrt{-21})$};
     \node (4)[right of=3]  {$\Q(\sqrt{21})$};
     \node (5)[right of=4]  {$\Q(\sqrt{-7})$};
     \node (6)[right of=5]  {$\Q(\sqrt{-3})$};
      \node (7)[below of=3, right of = 3]  {$\Q$};
     \draw[-] (1) to node {} (3);
     \draw[-] (2) to node {} (4);
     \draw[-] (2) to node {} (5);
     \draw[-] (2) to node {} (6);
     \draw[-] (3) to node {} (7);
     \draw[-] (4) to node {} (7);
     \draw[-] (5) to node {} (7);
     \draw[-] (6) to node {} (7);
\end{tikzpicture}
\end{minipage}
\hspace{3.5cm}
\begin{minipage}{.15\textwidth}
\begin{tikzpicture}[node distance = 1.5cm, auto]
     \node (1) {$L(E[2])$};
     \node (2)[right=of 1]   {$L(E[3])$};
     \node (3)[below=of 1]  {$L(\sqrt{-21})=L(\sqrt{21})$};
     \node (5)[right of=4]  {$L(\sqrt{-7})$};
     \node (6)[right of=5]  {$L(\sqrt{-3})$};
      \node (7)[below of=3, right of = 3]  {$L$};
     \draw[-] (1) to node {} (3);
     \draw[-] (2) to node {} (3);
     \draw[-] (2) to node {} (5);
     \draw[-] (2) to node {} (6);
     \draw[-] (3) to node {} (7);
     \draw[-] (5) to node {} (7);
     \draw[-] (6) to node {} (7);
\end{tikzpicture}
\end{minipage}
\end{example}

\begin{remark}\label{Z2Q}\nf
If $E$ is defined over $\Q$ then Theorem \ref{main} implies that 
$$
\Z/2\Z\in \Ent_{p,q}(E/\Q)
$$
for all distinct odd primes $p$ and $q$. This is due to the fact that $\Q(\zeta_p)\subseteq \Q(E[p])$ (by the Weil pairing), so $2$ divides $d(E,p,q)$ whenever $p$ and $q$ are both odd. When $p=2$, since the density of  elliptic curves with nonsquare discriminant is 1 (those with square discriminant have $j$-invariant $1728+t^2$), one still expects $\Z/2\Z\in \Ent_{2,q}(E/\Q) $ for `most' elliptic curves. 
\end{remark}

If both the mod $p$ and $q$ representations of $E/K$ are surjective (true for all but finitely many $(p,q)$ by \cite{Ser72}) then 
$$
d(E,p,q)=\gcd\big((p^2-1)(p^2-p),(q^2-1)(q^2-q)\big).
$$
It is straightforward to check that this integer is always divisible by 6 (for any choice of $p$ and $q$) and that it is divisible by $p$ if $q\equiv \pm 1\Mod p$. Theorem \ref{main} and divisibility \eqref{div1} immediately imply the following. 

\begin{corollary}\label{classification} If  $\rho_{E,p}$ and $\rho_{E,q}$ are surjective then
\begin{enumerate}
\item $\Z/2\Z,\Z/3\Z \in \Ent_{p,q}(E/K)$, and
\item if $q\equiv \pm 1\Mod p$ then $\Z/p\Z \in \Ent_{p,q}(E/K)$.
\end{enumerate}
\end{corollary}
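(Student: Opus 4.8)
The plan is to reduce the corollary entirely to Theorem~\ref{main} together with an elementary divisibility computation. Theorem~\ref{main} guarantees that $\Z/\ell\Z \in \Ent_{p,q}(E/K)$ for every prime $\ell \mid d(E,p,q)$, so it suffices to verify that $2$ and $3$ always divide $d(E,p,q)$ under the surjectivity hypothesis, and that $p$ divides $d(E,p,q)$ under the additional congruence $q \equiv \pm 1 \Mod p$.

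First I would record that surjectivity of $\rho_{E,p}$ and $\rho_{E,q}$ means $\#\im\rho_{E,p} = \#\GL_2(\Z/p\Z)$ and likewise for $q$, and I would use the standard factorization
\[
\#\GL_2(\Z/r\Z) = (r^2-1)(r^2-r) = (r-1)\cdot r(r-1)(r+1)
\]
for a prime $r$. The key observation is that $r(r-1)(r+1)$ is a product of three consecutive integers and hence divisible by $6$; this already shows $6 \mid \#\GL_2(\Z/r\Z)$ for every prime $r$, so $6 \mid d(E,p,q)$. Since $2$ and $3$ both divide $d(E,p,q)$, Theorem~\ref{main} yields $\Z/2\Z,\Z/3\Z \in \Ent_{p,q}(E/K)$, establishing part (1).

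For part (2), I would note that $p \mid \#\GL_2(\Z/p\Z)$ is immediate from the factorization above, so the only thing to check is $p \mid \#\GL_2(\Z/q\Z) = (q-1)\cdot q(q-1)(q+1)$. Since $p \neq q$ we have $p \nmid q$, and the congruence $q \equiv \pm 1 \Mod p$ forces $p \mid (q-1)$ or $p \mid (q+1)$; either way $p$ divides the product. Hence $p \mid d(E,p,q)$, and Theorem~\ref{main} again gives $\Z/p\Z \in \Ent_{p,q}(E/K)$.

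There is no serious obstacle here: the entire content of the statement is carried by Theorem~\ref{main}, and what remains is only the elementary arithmetic of $\#\GL_2(\Z/r\Z)$. The single point requiring any care is to ensure the relevant divisibilities hold in the \emph{greatest common divisor}, i.e.\ simultaneously for both $p$ and $q$; this is exactly why the three-consecutive-integers argument is applied to each prime separately, and why in part (2) one must verify the $q$-side divisibility rather than relying on the trivial $p$-side.
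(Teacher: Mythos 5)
Your proposal is correct and follows exactly the paper's route: the paper likewise deduces the corollary from Theorem~\ref{main} together with the observation that $d(E,p,q)=\gcd\big((p^2-1)(p^2-p),(q^2-1)(q^2-q)\big)$ is always divisible by $6$ and is divisible by $p$ when $q\equiv\pm 1\Mod p$. Your three-consecutive-integers argument simply makes explicit the divisibility check the paper calls ``straightforward.''
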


We take this idea a step further in Theorem \ref{2qclassification} where we show that when $p=2$,
$$
\Ent_{2,q}(E/\Q)\subseteq \{1,\Z/2\Z,\Z/3\Z, S_3\},
$$
 for all odd primes $q$, with equality if $\im \rho_{E,2q}$ is maximal. 

The size of $d(E,p,q)$ can be drastically smaller when either $p$ or $q$ is exceptional, or when $E$ has complex multiplication, and in these cases the divisibility \eqref{div1} forces $\Ent_{p,q}(E/K)$ to also shrink in size. We remark that, for any given elliptic curve $E/\Q$, the number $d(E,p,q)$
is explicitly computable using \cite{Zyw15}. 

The proof of Theorem \ref{main} is constructive in the sense that it describes an explicit field $L$ (as the fixed field of a certain subgroup of $\Gal(K(E[pq]/K)$) which yields the described cyclic entanglement. It does not, however, describe \emph{all such} fields. Indeed, if there exists some $L_0/K$ such that $T_{p,q}(E/L_0)=T$ for a fixed group $T$, we suspect that the set 
\begin{equation}\label{Fset}
\{L/K \mid T_{p,q}(E/L)=T\}
\end{equation}
is infinite. We prove this for $T=\Z/\ell\Z$, where $\ell$ is as in Theorem \ref{main} (see Theorem \ref{infmanyfields}).  Note that the set \eqref{Fset}  has an obvious partial order under field inclusion. It would be interesting to obtain a description of the \emph{minimal} fields $L$ which achieve a desired entanglement. We suspect that such minimal fields always occur as subfields of $K(E[pq])$.

\subsection{Related questions and results}\label{relatedQs} There has been much work in recent years on classifying all elliptic curves defined over a \emph{fixed} base field $K$ (often $K=\Q$) having some prescribed entanglement properties. See, for example, \cite{BJ16}, \cite{Mor19}, \cite{DM20}, \cite{DLM21}, and  \cite{JM22}, where variations of the following question are considered. 

\begin{question} \it For which tuples $(p,q,T)$ do there exist infinitely many (non $\bar K$-isomorphic) elliptic curves $E/K$ having $(p,q)$-entanglement of type $T$? 
\end{question}

This type of question is amenable to study using the theory of modular curves, where (roughly speaking) one constructs a modular curve whose $K$-rational points correspond to elliptic curves with the desired entanglement properties. For example, by considering certain genus 0 and genus 1 (positive rank) modular curves, it is shown in \cite{DM20} that the only \emph{unexplained} (see Definition 4.5 of \emph{op. cit.}) entanglement types $T$ which can occur for infinitely many non-$\bar \Q$-isomorphic elliptic curves are $ \Z/2\Z$, $\Z/3\Z$, and $S_3$, and that they can occur only at the pairs $(p,q)$ in the sets $\{(2,3),(2,5),(2,7),(2,13),(3,5)\}$, $\{(2,3),(2,5),(2,7)\}$, and $\{(2,3)\}$, respectively. 

We take a different approach, by instead beginning with a fixed tuple $(p,q,E/K)$, and considering what $(p,q)$-entanglements are produced upon base extensions by $L/K$. Other related questions are as follows: 

\begin{enumerate}

\item[(A)] 
Is there an algorithm that computes $\Ent_{p,q}(E/K)$ directly from the group structure of $\im\rho_{E,p}$, $\im\rho_{E,q}$, and $\im\rho_{E,pq}$? In other words, if one knows the images mod $p$ and $q$, and how the $(p,q)$-entanglement over $K$ fits into them, is it possible to determine all $(p,q)$-entanglement types over every extension of $K$? We discuss this question when $p=2$ in \S\ref{classificationsection}. 

\item[(B)] For fixed $(K,p,q)$, does there always exist an elliptic curve $E/K$ having nontrivial $(p,q)$-entanglement over $K$? When $K=\Q$ and $p=2$, the answer to this question is `yes': one can force \emph{explained} (see \cite{DM20}) entanglement using the Weil pairing. Specifically, any elliptic curve $E/\Q$ with $j$-invariant 
 $
 1728\pm qt^2,
 $
 where $\pm$ is the sign of $(-1)^{\frac{q-1}{2}}$ and $t\in \Q^\times$, will have $(2,q)$-entanglement: the $j$-invariant forces $\Delta_{E}\equiv  \pm q\Mod \Q^{\times 2}$, thus  $\Q(\sqrt{\Delta_E})=\Q(\sqrt{\pm q})$ is contained in both $\Q(E[2])$ and $\Q(\zeta_q)\subseteq \Q(E[q])$.

\item[(C)] 
Suppose that $E$ has no $(p,q)$-entanglement over its minimal basefield $K=\Q(j(E))$. Is the set  $\Ent_{p,q}(E/K)$ always nontrivial? That is, can one always base extend by some $L/K$ so that $E$ gains $(p,q)$-entanglement? When $p$ and $q$ are odd and $E$ is defined over $\Q$, the answer to this question is `yes' (see Remark \ref{Z2Q}). See also Remark \ref{Cremark}.
\end{enumerate}

\subsection{Acknowledgments}
 The authors thank Harris B. Daniels for many inspiring conversations and crucial suggestions throughout the preparation of this article. We are also grateful to Jeffrey Hatley for several helpful comments and corrections.

\section{Entanglement Over Number Fields}

Fix an algebraic extension $K/\Q$, an elliptic curve $E/K$, and distinct primes $p$ and $q$. Define
\begin{align*}
K_{p,q}(E)&=K(E[p])\cap K(E[q]),\\
T_{p,q}(E/K)&= \Gal\big(K_{p,q}(E)/K\big),\\
\Ent_{p,q}(E/K)&=\{T_{p,q}(E/L) \mid \text{$L/K$} \}.
\end{align*}
We often suppress the elliptic curve $E$ from the notation for the $(p,q)$-entanglement field  and simply write $K_{p,q}$. Since the compositum 
$K\Q(E[p])=K(E[p])$ we note that $[K(E[p]):K]=[\Q(E[p]):\Q(E[p])\cap K]$ divides the integer $[\Q(E[p]):\Q]$, hence the group $T_{p,q}(E/K)$ is always finite. 

\begin{definition}  \nf 
We say that $E$ has \emph{$(p,q)$-entanglement} over an extension $L/K$ if $T_{p,q}(E/L)$ is nontrivial. The \emph{entanglement type} of $E$ over $L$ is the isomorphism class of $T_{p,q}(E/L)$. 
\end{definition}

\begin{proposition}\label{sizebound} For any algebraic extension $L/K$ we have 
$$
\#T_{p,q}(E/L)\,\, \big| \,\,  \gcd\big([K(E[p]):K],[K(E[q]):K]\big).
$$
In particular, $\displaystyle\sup_{L/K}\# T_{p,q}(E/L)<\infty$ and the set $\Ent_{p,q}(E/K)$ is finite. 
\end{proposition}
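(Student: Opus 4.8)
The plan is to reduce everything to two elementary facts: that each torsion field is finite over its base, and that forming a compositum with a fixed finite extension can only make degrees smaller. The key observation is that, because $E$ is defined over $K$, the $p$-torsion field over $L$ is nothing but the compositum $L(E[p]) = L \cdot K(E[p])$, and likewise for $q$. This is what ties the degrees over the (possibly very large) field $L$ back to degrees over the original field $K$.

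First I would record that for each $r \in \{p,q\}$ the extension $K(E[r])/K$ is finite Galois, so the theory of compositum with a finite Galois extension gives that the restriction map $\Gal(L(E[r])/L) \to \Gal(K(E[r])/K)$ is an injection with image $\Gal(K(E[r])/(L \cap K(E[r])))$. In particular $[L(E[r]):L]$ divides $[K(E[r]):K]$, and $L(E[r])/L$ is finite. This step is phrased entirely in terms of the finite extension $K(E[r])/K$, so it remains valid even when $L/K$ is infinite.

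Next, since $L_{p,q}(E) = L(E[p]) \cap L(E[q])$ is an intermediate field of the finite extension $L(E[p])/L$, multiplicativity of degrees in the tower $L \subseteq L_{p,q}(E) \subseteq L(E[p])$ shows that $[L_{p,q}(E):L] = \#T_{p,q}(E/L)$ divides $[L(E[p]):L]$; by the symmetric argument it also divides $[L(E[q]):L]$. Hence $\#T_{p,q}(E/L)$ divides $\gcd\big([L(E[p]):L],[L(E[q]):L]\big)$. Combining this with the previous step, together with the elementary observation that $a'\mid a$ and $b'\mid b$ imply $\gcd(a',b')\mid\gcd(a,b)$, yields the stated divisibility by $\gcd\big([K(E[p]):K],[K(E[q]):K]\big)$.

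Finally, the ``in particular'' is immediate once the divisibility is in hand: the right-hand side is a fixed positive integer $N$ independent of $L$, so $\#T_{p,q}(E/L)\le N$ for every $L/K$, giving $\sup_{L/K}\#T_{p,q}(E/L)\le N<\infty$; and $\Ent_{p,q}(E/K)$ is then contained in the finite set of isomorphism classes of groups of order at most $N$, hence is finite. I do not anticipate a genuine obstacle in this argument. The only point that requires care is that $L/K$ may be infinite, which is precisely why I would carry out the compositum step against the finite Galois extension $K(E[r])/K$ rather than attempting to work with $L$ directly.
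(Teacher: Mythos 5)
Your proof is correct and follows essentially the same route as the paper's: both identify $[L(E[r]):L]$ with $[K(E[r]):L\cap K(E[r])]$ via the compositum of $L$ with the finite Galois extension $K(E[r])/K$, observe that $L_{p,q}$ is intermediate in each $L(E[r])/L$ to get divisibility into the gcd, and then use tower multiplicativity to divide into $[K(E[r]):K]$. The extra care you take with infinite $L/K$ and the finiteness of isomorphism classes of groups of bounded order is implicit in the paper's argument but not a different idea.
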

\begin{proof} Observe that
\begin{align*}
\#T_{p,q}(E/L)&=[L_{p,q}:L]\\
&\big| \,\, \gcd\big([L(E[p]):L],[L(E[q]):L]\big)\\
&= \gcd\big([K(E[p]):L\cap K(E[p])],[K(E[q]):L\cap K(E[q])]\big)\\
&\,\, \big| \,\, \gcd\big([K(E[p]):K],[K(E[q]):K]\big).
\end{align*}
\end{proof}

\subsection{Group-theoretic formulation} 
Fix a basis for $E[pq]$ and let $\rho_{E,pq} :G_K\rightarrow \GL_2(\Z/pq\Z)$ denote the Galois representation corresponding to the action of $G_K$ on $E[pq]$. Following \cite{DM20}, the group $\Gg :=\im\rho_{E,pq}$ is said to \emph{represent} a $(p,q)$-entanglement if 
$$
\langle \ker\pi_p\cap \Gg,\ker\pi_q\cap \Gg\rangle \subsetneq \Gg,
$$
where $\pi_p$ and $\pi_q$ are the natural reduction mod $p$ and $q$ maps on $\GL_2(\Z/pq\Z)$. In particular,  $E$ has $(p,q)$-entanglement if and only if $\Gg$ represents a $(p,q)$-entanglement. From this perspective, the entanglement type of $E$ over $K$ is
$$
T_{p,q}(E/K)\cong \Gg/\langle \ker\pi_p\cap \Gg,\ker\pi_q\cap \Gg\rangle. 
$$ 

\subsection{Stable and decreasing entanglement}

A simple way to decrease entanglement size is to base-extend by a subfield $L$ of the $(p,q)$-entanglement field $K_{p,q}$, in which case  one can identify the entanglement over $L$ with a subgroup of the entanglement over $K$. 

\begin{proposition}\label{prop-subshrink} If $K\subseteq L\subseteq K_{p,q}$ then 
$T_{p,q}(E/L)\cong \Gal(K_{p,q}/L)\subseteq T_{p,q}(E/K)$.
\end{proposition}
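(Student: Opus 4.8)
The plan is to show that base-extending to a field $L$ with $K\subseteq L\subseteq K_{p,q}$ does not change the entanglement field at all; the only thing that shrinks is the Galois group, since we are fixing more of the base. Concretely, I would first prove that $L_{p,q}=K_{p,q}$, and then read off the group-theoretic statement directly from the Galois correspondence.

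For the first step, observe that $L\subseteq K_{p,q}=K(E[p])\cap K(E[q])$ forces both $L\subseteq K(E[p])$ and $L\subseteq K(E[q])$. Since $E$ is defined over $K\subseteq L$ and the torsion field $K(E[p])$ already contains $L$ together with the coordinates of $E[p]$, the compositum satisfies $L(E[p])=K(E[p])$; by the identical argument $L(E[q])=K(E[q])$. Intersecting gives
$$
L_{p,q}=L(E[p])\cap L(E[q])=K(E[p])\cap K(E[q])=K_{p,q},
$$
and hence $T_{p,q}(E/L)=\Gal(L_{p,q}/L)=\Gal(K_{p,q}/L)$.

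For the second step, I would invoke that $K(E[p])/K$ and $K(E[q])/K$ are each Galois, being the fixed fields of the (normal) kernels of $\rho_{E,p}$ and $\rho_{E,q}$, so their intersection $K_{p,q}/K$ is Galois as well. Because $K\subseteq L\subseteq K_{p,q}$ with $K_{p,q}/K$ Galois, the top extension $K_{p,q}/L$ is again Galois, and the Galois correspondence identifies $\Gal(K_{p,q}/L)$ with the subgroup of $\Gal(K_{p,q}/K)=T_{p,q}(E/K)$ consisting of the automorphisms fixing $L$ pointwise. This yields the claimed inclusion $T_{p,q}(E/L)\cong\Gal(K_{p,q}/L)\subseteq T_{p,q}(E/K)$.

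There is no serious obstacle here; the only point requiring care is the invariance $L_{p,q}=K_{p,q}$, i.e. confirming that adjoining the torsion to $L$ recovers exactly the torsion field already obtained over $K$. Once this is in hand, the subgroup statement — and crucially the fact that it is a genuine subgroup \emph{inclusion}, not merely an abstract injection — follows formally from the normality of $K_{p,q}/K$.
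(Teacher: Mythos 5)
Your proof is correct and follows essentially the same route as the paper: the paper's (one-line) proof is exactly your key observation that $L\subseteq K_{p,q}$ forces $L(E[p])=K(E[p])$ and $L(E[q])=K(E[q])$, hence $L_{p,q}=K_{p,q}$, with the subgroup inclusion then coming from the Galois correspondence. Your write-up simply makes explicit the details (normality of $K_{p,q}/K$, Galois-ness of the top extension) that the paper leaves to the reader.
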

\begin{proof} 
This follows from the fact that if $L\subseteq K_{p,q}$ then $L(E[p])=K(E[p])$ and $L(E[q])=K(E[q])$.
\end{proof}

\begin{example}\label{ex-2in3}\nf Let $E/\Q$ be any elliptic curve with $j$-invariant of the form 
 $$
2^{10} 3^3 t^3(1-4t^3)
 $$
 for some $t\in \Q\backslash\{0,\frac{1}{2}\}$.  (For instance, taking $t=\frac{2}{3}$ gives the elliptic curve \href{https://www.lmfdb.org/EllipticCurve/Q/300a1/}{\texttt{300a1}}.) Then it follows from \cite[Remark 1.5]{BJ16} and \cite[\S 8.1]{DM20}, respectively, that 
 \begin{itemize}
\item $E$ has $(2,3)$-entanglement of type $S_3$, and 
\item $\Q(E[2])\subseteq \Q(E[3])$.
\end{itemize}
The 2-torsion field contains the quadratic field $\Q(\sqrt{\Delta_E})$ and three distinct cubic subfields, which we denote by $L_1,L_2$, and $L_3$. It follows that
$$
T_{2,3}(E/L)=\begin{cases} S_3&\text{if $L=\Q$} \\
\Z/3\Z & \text{if $L=\Q(\sqrt{\Delta_E})$}\\
\Z/2\Z& \text{if $L=L_i$.}
\end{cases}
$$
\end{example}

The relation between $T_{p,q}(E/L)$ and $T_{p,q}(E/K)$ 
for arbitrary $L/K$ is illustrated in the following diagram.
\begin{figure}[H]
\begin{center}    
\begin{tikzpicture}[node distance = 1.5cm, auto]
     \node (1) {$L_{p,q}$};
     \node (2)[below of=1]  {$K_{p,q}L$};
     \node (3) [below of=2, left of =2] {$K_{p,q}$};
     \node (4) [below of=2, right of=2] {$ L$};
     \node (5) [below of=3, right of=3] {$K_{p,q}\cap L$};
     \node (6) [below of=5] {$K$};
     \draw[-] (1) to node {} (2);
     \draw[-] (2) to node {} (3);
     \draw[-] (2) to node {} (4);
     \draw[-] (3) to node {} (5);
      \draw[-] (4) to node {} (5);
       \draw[-] (5) to node {} (6);
        \draw[-] (3) to node[xshift=-2cm,yshift=-5mm] {$T_{p,q}(E/K)$} (6);
        \draw[-] (1) to node[xshift=.0cm,yshift=0mm] {$T_{p,q}(E/L)$}(4);
\end{tikzpicture}
\end{center}
\caption{}
\label{LKdiagram}
\end{figure}

Notice that if $LK_{p,q}=L_{p,q}$ then one can again identify the entanglement over $L$ with a subgroup of the entanglement over $K$. However, this need not be the case: for instance, in Example \ref{quadex}  one has $LK_{p,q}=\Q(\sqrt{-1})$ but $L_{p,q}=\Q(\sqrt{-1},\sqrt{21})$. The following lemma clarifies this relationship. 

\begin{lemma}\label{lem-comp} Let $L$, $K_1,$ and $K_2$ be finite extensions of $K$. Then $L(K_1\cap K_2)=LK_1\cap LK_2$ if and only if $LK_1$ and $LK_2$ are linearly disjoint over $L(K_1\cap K_2)$. 
\end{lemma}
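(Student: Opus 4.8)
The plan is to translate the statement into one about the three fields $A := LK_1$, $B := LK_2$, and $F := L(K_1\cap K_2)$, and then reduce everything to a degree computation via the tensor-product criterion for linear disjointness. First I would record that $F\subseteq A\cap B$ holds automatically, since $L\subseteq LK_1$ and $K_1\cap K_2\subseteq K_1\subseteq LK_1$ (and symmetrically for $B$). Consequently the desired identity $L(K_1\cap K_2)=LK_1\cap LK_2$ is equivalent to the single reverse inclusion $A\cap B\subseteq F$, i.e.\ to $A\cap B=F$. Since the compositum of $A$ and $B$ is $AB=LK_1K_2$, the lemma becomes the equivalence
$$
A\cap B=F \iff A,B \text{ are linearly disjoint over } F.
$$

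The mechanism I would use is the standard fact that, for finite extensions, $A$ and $B$ are linearly disjoint over $F$ precisely when the natural multiplication map $A\otimes_F B\to AB$ is an isomorphism; this map is always surjective onto the field $AB$, so it is an isomorphism if and only if the dimensions agree, that is, if and only if
$$
[AB:F]=[A:F]\,[B:F].
$$
This reduces the entire lemma to comparing degrees.

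The implication ``linearly disjoint $\Rightarrow A\cap B=F$'' is the \emph{routine} direction and holds in full generality: if some $x\in A\cap B$ lay outside $F$, then $1,x$ would be $F$-linearly independent elements of $A$, hence (by the definition of linear disjointness) $B$-linearly independent, contradicting the $B$-relation $x\cdot 1-1\cdot x=0$ that arises because $x\in B$. The direction I expect to be the main obstacle is the converse, ``$A\cap B=F\Rightarrow$ linearly disjoint,'' because the intersection condition is in general strictly weaker than linear disjointness, and the degree identity above need not follow from it without an extra hypothesis. To bridge this gap I would invoke normality of the extensions involved: in every instance where the lemma is applied one has $K_1=K(E[p])$ and $K_2=K(E[q])$, which are Galois over $K$, so that $A=LK_1$ and $B=LK_2$ are Galois over $L$ and hence over the intermediate field $F$. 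For $A/F$ Galois one has the standard equality $[AB:B]=[A:A\cap B]$; feeding in $A\cap B=F$ gives $[AB:B]=[A:F]$ and therefore $[AB:F]=[AB:B][B:F]=[A:F][B:F]$, which is exactly the degree identity characterizing linear disjointness. Thus the crux of the proof is the passage from the intersection condition to the multiplicativity of degrees, and it is precisely here that the normality of the torsion fields does the essential work.
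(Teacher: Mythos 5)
Your proof is correct, and it is in fact more careful than the paper's own argument. The global strategy is the same: both you and the paper note that $F:=L(K_1\cap K_2)\subseteq LK_1\cap LK_2$ is automatic and reduce linear disjointness to the degree identity $[LK_1K_2:F]=[LK_1:F]\,[LK_2:F]$. But the mechanism for the hard direction differs, and the difference is substantive. The paper finishes with a degree count in the diamond $F\subseteq LK_1\cap LK_2\subseteq LK_1,LK_2\subseteq LK_1K_2$, asserting via the ``dashed lines'' of its diagram that $[LK_1K_2:LK_2]=[LK_1:LK_1\cap LK_2]$; that equality is precisely the nontrivial content of the lemma and is given no justification. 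You instead prove the implication ``$LK_1\cap LK_2=F\Rightarrow$ linearly disjoint'' by importing normality ($K(E[p])$ and $K(E[q])$ are Galois over $K$, hence $LK_1$ is Galois over $F$) and invoking $[AB:B]=[A:A\cap B]$ for $A/F$ Galois. Your caution here is not cosmetic: without some normality hypothesis this direction of the lemma is \emph{false}, and so is the paper's dashed-line equality. Take $K=L=\Q$, $K_1=\Q(\sqrt[3]{2})$, $K_2=\Q(\omega\sqrt[3]{2})$ with $\omega$ a primitive cube root of unity: then $L(K_1\cap K_2)=LK_1\cap LK_2=\Q$, yet $[K_1K_2:\Q]=6\neq 9$, so $K_1$ and $K_2$ are not linearly disjoint over $\Q$ (and $[K_1K_2:K_2]=2\neq 3=[K_1:K_1\cap K_2]$). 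So the Galois hypothesis you add is a necessary repair rather than a weakness of your method. Two further remarks: the direction from linear disjointness to the field equality, which you dispose of by the $1,x$ independence trick, indeed holds unconditionally (one can also salvage it from the paper's diagram using only the inequality $[LK_1K_2:LK_2]\le[LK_1:LK_1\cap LK_2]$); and since the paper applies Lemma \ref{lem-comp} only with $K_1=K(E[p])$, $K_2=K(E[q])$ (in Proposition \ref{shrinkstable}), and only in that unconditional direction, nothing downstream of the lemma is affected by the gap you identified.
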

\begin{proof} Since $L(K_1\cap K_2)$ is the smallest field containing both $L$ and $K_1\cap K_2$, we know that $L(K_1\cap K_2)\subseteq LK_1\cap LK_2$. Write
\begin{align*}
a_1&=[LK_{1}:L(K_1\cap K_2)]\\
a_2&=[LK_{2}:L(K_1\cap K_2)]\\
b &=[LK_1\cap LK_2:L(K_1\cap K_2)].
\end{align*}
Then $LK_1$ and $LK_2$ are linearly disjoint over $L(K_1\cap K_2)$ if and only if $[LK_1K_2:L(K_1\cap K_2)]=a_1a_2$. It therefore suffices to show that $b=1$ if and only if $[LK_1K_2:L(K_1\cap K_2)]=a_1a_2$, which follows from the diagram below, where the dashed lines indicate extensions of the same degree. 
\begin{center}    
\begin{tikzpicture}[node distance = 1.5cm, auto]
     \node (1) {$LK_1K_2$};
     \node (2)[below of=1, left of =1]  {$LK_1$};
     \node (3) [below of=1, right of =1] {$LK_2$};
     \node (4) [below of=2, right of =2] {$ LK_1\cap LK_2$};
     \node (5) [below of=4] {$L(K_1\cap K_2)$};
     \draw[-] (1) to node[xshift=-5mm,yshift=4mm] {} (2);
     \draw[dashed] (1) to node {} (3);
     \draw[dashed] (2) to node[xshift=-5mm,yshift=-4mm] {} (4);
     \draw[-] (3) to node {} (4);
      \draw[-] (4) to node {$b$} (5);
\end{tikzpicture}
\end{center}
\end{proof} 

\begin{proposition}\label{shrinkstable} Let $L/K$ be a finite extension. If $L(E[p])$ and $L(E[q])$ are linearly disjoint over $LK_{p,q}$ then 
$$
T_{p,q}(E/L)\cong\Gal(K_{p,q}/L\cap K_{p,q}) 
$$
\end{proposition}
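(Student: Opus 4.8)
The plan is to reduce the statement to an application of Lemma \ref{lem-comp} followed by the standard Galois-theoretic identification of the Galois group of a compositum. First I would record the compositum identities: since the $p$-torsion points generate $K(E[p])$ over $K$, one has $L(E[p]) = L\cdot K(E[p])$ and likewise $L(E[q]) = L\cdot K(E[q])$. Setting $K_1 = K(E[p])$ and $K_2 = K(E[q])$ — both finite over $K$ since $\Gal(K(E[n])/K)$ embeds in $\GL_2(\Z/n\Z)$ — we have $K_1 \cap K_2 = K_{p,q}$ and $L(K_1 \cap K_2) = LK_{p,q}$, so that all three fields $L$, $K_1$, $K_2$ are finite over $K$ and Lemma \ref{lem-comp} applies.

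Next I would invoke Lemma \ref{lem-comp} directly: the hypothesis that $L(E[p])$ and $L(E[q])$ are linearly disjoint over $LK_{p,q} = L(K_1\cap K_2)$ is, by that lemma, equivalent to the equality $L(K_1 \cap K_2) = LK_1 \cap LK_2$. Unwinding the notation, this says
$$
L_{p,q} = L(E[p]) \cap L(E[q]) = LK_{p,q}.
$$
Thus under the hypothesis the entanglement field over $L$ is exactly the compositum $LK_{p,q}$, and therefore
$$
T_{p,q}(E/L) = \Gal(L_{p,q}/L) = \Gal(LK_{p,q}/L).
$$

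Finally, since $K(E[p])/K$ and $K(E[q])/K$ are Galois, their intersection $K_{p,q}/K$ is again Galois, so I would apply the standard theorem that composing a Galois extension with an arbitrary extension yields, via restriction, an isomorphism
$$
\Gal(LK_{p,q}/L) \xrightarrow{\ \sim\ } \Gal(K_{p,q}/L\cap K_{p,q}).
$$
Chaining the identifications from the previous paragraph gives the claimed isomorphism $T_{p,q}(E/L) \cong \Gal(K_{p,q}/L\cap K_{p,q})$.

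I do not expect any serious obstacle: every step is either a formal compositum/intersection identity or a citation of an already-available result (Lemma \ref{lem-comp} and the fundamental theorem on compositum Galois groups), and the collapse $L_{p,q} = LK_{p,q}$ is exactly the degenerate case of Figure \ref{LKdiagram}. The only points requiring minor care are checking that all the fields involved are finite over $K$ so that Lemma \ref{lem-comp} is applicable, and noting that $K_{p,q}/K$ is Galois so that the restriction map lands isomorphically onto $\Gal(K_{p,q}/L\cap K_{p,q})$ rather than merely injecting.
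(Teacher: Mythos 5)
Your proposal is correct and follows essentially the same route as the paper: apply Lemma \ref{lem-comp} with $K_1=K(E[p])$, $K_2=K(E[q])$ to conclude $L_{p,q}=LK_{p,q}$, then use the standard Galois-theoretic isomorphism $\Gal(LK_{p,q}/L)\cong\Gal(K_{p,q}/L\cap K_{p,q})$ (which the paper encodes via Figure \ref{LKdiagram}). You merely spell out the details the paper leaves implicit, such as the finiteness hypotheses and the fact that $K_{p,q}/K$ is Galois.
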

\begin{proof} From Lemma \ref{lem-comp}, we know that $K_{p,q}L=L_{p,q}$. The result now follows from the field diagram in Figure \ref{LKdiagram}.
\end{proof}

\begin{example} \nf Let $E/\Q$ be as in Example \ref{ex-2in3} above and let $L/\Q$ be any quadratic field not equal to $\Q(\sqrt{\Delta_E})$. Since $L(E[2])\subseteq L(E[3])$, clearly $L(E[2])$ and $L(E[3])$ are linearly disjoint over (their common intersection) $L(E[2])=L(\Q(E[2])\cap\Q(E[3]))$. Since $L\cap \Q(E[2])\cap\Q(E[3])=\Q$, it follows from Proposition \ref{shrinkstable} that the $(2,3)$-entanglement type does not change as we base extend from $\Q$ to $L$. More succinctly, combined with Example \ref{ex-2in3}, we find that as $D$ ranges over square-free integers,
$$
T_{2,3}(E/\Q(\sqrt{D}))=\begin{cases} \Z/3\Z&\text{if $D=\Delta_E$,} \\
S_3 & \text{otherwise.}
\end{cases}
$$
\end{example}

\subsection{Entanglement along subfields of $K(E[pq])$.}
The following proposition allows one to determine the size of the entanglement along subfields of $K(E[pq])$.

\begin{proposition}\label{LKlemma} If $L/K$ is a subfield of $K(E[pq])$ then
$$
\#T_{p,q}(E/L)= \#T_{p,q}(E/K) \frac{[L:K]}{[K(E[p])\cap L:K][K(E[q])\cap L:K]}.
$$
In particular, if $L$ trivially intersects $K(E[p])$ and $K(E[q])$ then
$$
\#T_{p,q}(E/L)=\#T_{p,q}(E/K)[L:K].
$$
\end{proposition}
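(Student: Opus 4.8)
The plan is to express both $\#T_{p,q}(E/L)$ and $\#T_{p,q}(E/K)$ as ratios of torsion-field degrees and then compare them using the hypothesis $L\subseteq K(E[pq])$. The starting point is that, since $p$ and $q$ are distinct primes, the Chinese Remainder Theorem gives $E[pq]\cong E[p]\oplus E[q]$, so $M(E[pq])=M(E[p])M(E[q])$ for every field $M$ with $K\subseteq M$. Because each torsion field $M(E[p])$ and $M(E[q])$ is Galois over $M$, the standard compositum degree formula applies and yields
$$
\#T_{p,q}(E/M)=[M(E[p])\cap M(E[q]):M]=\frac{[M(E[p]):M]\,[M(E[q]):M]}{[M(E[pq]):M]}.
$$
I would record this identity for both $M=K$ and $M=L$. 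Note that since $M(E[p])$ and $M(E[q])$ are Galois over $M$, so is their intersection, which is what lets us read $\#T_{p,q}(E/M)$ directly as the degree $[M(E[p])\cap M(E[q]):M]$.

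Next I would use $L\subseteq K(E[pq])$ to rewrite each of the three degrees appearing in the $M=L$ version in terms of degrees over $K$. First, the inclusion $L\subseteq K(E[pq])$ forces $L(E[pq])=K(E[pq])$, whence $[L(E[pq]):L]=[K(E[pq]):K]/[L:K]$. Second, since $K(E[p])/K$ is Galois, the compositum $L(E[p])=LK(E[p])$ satisfies
$$
[L(E[p]):L]=[K(E[p]):K(E[p])\cap L]=\frac{[K(E[p]):K]}{[K(E[p])\cap L:K]},
$$
and likewise for $q$. Substituting these three expressions into the ratio formula for $\#T_{p,q}(E/L)$ and factoring out $[K(E[p]):K][K(E[q]):K]/[K(E[pq]):K]=\#T_{p,q}(E/K)$ leaves exactly the claimed identity. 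The displayed ``in particular'' statement then follows by setting both intersection degrees $[K(E[p])\cap L:K]$ and $[K(E[q])\cap L:K]$ equal to $1$.

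The algebra is routine multiplicativity of field degrees; the step I would flag as the crux is the identity $L(E[pq])=K(E[pq])$, which is precisely where the hypothesis $L\subseteq K(E[pq])$ enters and without which the telescoping fails. I would also take care to confirm that every compositum degree formula invoked is legitimate, i.e.\ that at least one factor in each compositum is Galois over the relevant base; this holds automatically because all torsion fields are Galois extensions, so no linear-disjointness hypothesis (as in Lemma \ref{lem-comp}) is required here.
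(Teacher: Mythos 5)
Your proof is correct and takes essentially the same route as the paper's: both rest on the compositum degree formula $[M(E[pq]):M]=[M(E[p]):M]\,[M(E[q]):M]/[M_{p,q}:M]$ applied over $M=K$ and $M=L$, the identity $L(E[pq])=K(E[pq])$, and the Galois shift $[L(E[p]):L]=[K(E[p]):K(E[p])\cap L]$. The only difference is bookkeeping: the paper equates two expressions for $[K(E[pq]):L]$ and solves for $[L_{p,q}:L]$, while you substitute the rewritten degrees directly into the ratio formula over $L$.
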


\begin{proof} Since $K(E[pq])=L(E[pq])$ we have
\begin{equation}\label{Kpq1}
[K(E[pq]):L]=\frac{[L(E[p]):L] [L(E[q]):L]}{[L_{p,q}:L]}.
\end{equation}
 We can also write 
\begin{equation}\label{Kpq2}
[K(E[pq]):L]=\frac{[K(E[pq]):K]}{[L:K]}=\frac{[K(E[p]):K] [K(E[q]):K]}{[L:K][K_{p,q}:K]}.
\end{equation}
Combining \eqref{Kpq1} and \eqref{Kpq2} we have
\begin{align*}
[L_{p,q}:L]&=[L:K][K_{p,q}:K]\frac{[L(E[p]):L]}{[K(E[p]):K]}\frac{[L(E[q]):L]}{[K(E[q]):K]}.
\end{align*}
The result now follows from the fact that $[L(E[p]):L]= [K(E[p]):L\cap K(E[p])]$, and similarly for $q$. 
\end{proof}

\begin{remark}\label{Cremark} \nf Proposition \ref{LKlemma} provides a partial answer to Question \S\ref{relatedQs}(C): if $E$ has no entanglement over $K$ and there exists a nontrivial subfield $K\subseteq L\subseteq K(E[pq])$ that is disjoint (over $K$) from both the $p$ and $q$-torsion fields then 
$$
\#T_{p,q}(E/L)=[L:K]>1.
$$
\end{remark}

 \begin{corollary}\label{quadprop} Suppose that $E/\Q$ has no $(p,q)$-entanglement over $\Q$. If $\Q(\sqrt{D_1})$ and $\Q(\sqrt{D_2})$ are distinct quadratic subfields of $\Q(E[p])$ and $\Q(E[q])$, respectively, then 
 $$
 T_{p,q}(E/\Q(\sqrt{D_1D_2}))=\Z/2\Z.
 $$
 In particular, for all odd primes $p$ and $q$ we have 
$$
T_{p,q}(E/\Q(\sqrt{\pm pq}))=\Z/2\Z,
$$
where $\pm=\sgn(-1)^{\frac{p+q}{2}+1}$
\end{corollary}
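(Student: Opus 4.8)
The plan is to deduce Corollary~\ref{quadprop} directly from Proposition~\ref{LKlemma}, treating the two assertions separately. First I would set up the relevant fields: since $\Q(\sqrt{D_1})\subseteq \Q(E[p])$ and $\Q(\sqrt{D_2})\subseteq \Q(E[q])$ are \emph{distinct} quadratic fields, I have $D_1\neq D_2$ in $\Q^\times/\Q^{\times 2}$, and I set $L=\Q(\sqrt{D_1D_2})$. The crucial observation is that $L$ is disjoint from both torsion fields: if $L\subseteq \Q(E[p])$, then since $\Q(\sqrt{D_1})\subseteq \Q(E[p])$ as well, we would get $\Q(\sqrt{D_2})=\Q(\sqrt{D_1}\cdot\sqrt{D_1D_2})\subseteq \Q(E[p])$, and combined with $\Q(\sqrt{D_2})\subseteq \Q(E[q])$ this would place $\Q(\sqrt{D_2})$ inside $\Q_{p,q}(E)=\Q(E[p])\cap\Q(E[q])$, contradicting the hypothesis of no entanglement over $\Q$ (which forces $\Q_{p,q}(E)=\Q$). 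The symmetric argument shows $L\not\subseteq \Q(E[q])$. Since $L$ is quadratic, disjointness from each torsion field follows, so $\Q(E[p])\cap L=\Q$ and $\Q(E[q])\cap L=\Q$.

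With disjointness established, I would apply the ``in particular'' clause of Proposition~\ref{LKlemma}: because $E$ has no entanglement over $\Q$, we have $\#T_{p,q}(E/\Q)=1$, and hence
\[
\#T_{p,q}(E/L)=\#T_{p,q}(E/\Q)\,[L:\Q]=1\cdot 2=2.
\]
Since any group of order $2$ is cyclic, $T_{p,q}(E/L)\cong \Z/2\Z$, which proves the first assertion. One subtlety to handle carefully is that Proposition~\ref{LKlemma} requires $L\subseteq \Q(E[pq])$; this holds because $L=\Q(\sqrt{D_1D_2})$ is the compositum (inside $\Q(E[p])\Q(E[q])=\Q(E[pq])$) of two quadratic fields lying in the respective torsion fields, so $\sqrt{D_1D_2}=\sqrt{D_1}\cdot\sqrt{D_2}\in\Q(E[pq])$.

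For the second assertion I would specialize to the case where $p$ and $q$ are odd. Here the Weil pairing guarantees $\Q(\zeta_p)\subseteq \Q(E[p])$, whose unique quadratic subfield is $\Q(\sqrt{p^\ast})$ with $p^\ast=(-1)^{(p-1)/2}p$, and similarly $\Q(\sqrt{q^\ast})\subseteq \Q(E[q])$ with $q^\ast=(-1)^{(q-1)/2}q$. Taking $D_1=p^\ast$ and $D_2=q^\ast$, the product is $p^\ast q^\ast=(-1)^{(p-1)/2+(q-1)/2}pq=(-1)^{(p+q)/2-1}pq$, matching the claimed sign $\pm=\sgn(-1)^{(p+q)/2+1}$ since $(-1)^{(p+q)/2+1}=(-1)^{(p+q)/2-1}$. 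These two quadratic fields are distinct (as $p\neq q$ are distinct primes, $p^\ast\neq q^\ast$ in $\Q^\times/\Q^{\times 2}$), so the first part applies and yields $T_{p,q}(E/\Q(\sqrt{\pm pq}))=\Z/2\Z$. The only genuine obstacle is verifying the disjointness claim cleanly, i.e.\ that no entanglement over $\Q$ truly forces $L$ to meet each torsion field trivially; the sign bookkeeping in the second part is routine once the Weil-pairing quadratic subfields are identified.
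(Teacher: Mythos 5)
Your proposal is correct and follows essentially the same route as the paper's proof: establish that $\Q(\sqrt{D_1D_2})$ meets both torsion fields trivially via the same contradiction argument (otherwise $\Q(\sqrt{D_2})$ would lie in $\Q(E[p])\cap\Q(E[q])=\Q$), apply Proposition~\ref{LKlemma}, and specialize to the Weil-pairing fields $\Q(\sqrt{p^\ast})$, $\Q(\sqrt{q^\ast})$ for the second assertion. You simply spell out two details the paper leaves implicit, namely the containment $L\subseteq\Q(E[pq])$ required by Proposition~\ref{LKlemma} and the sign computation $(-1)^{(p-1)/2+(q-1)/2}=(-1)^{(p+q)/2+1}$.
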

\begin{proof} Since $E$ has no $(p,q)$-entanglement over $\Q$, the field $\Q(\sqrt{D_1D_2})$ must intersect both $\Q(E[p])$ and $\Q(E[q])$  trivially. (If $\Q(\sqrt{D_1D_2})$ were a subfield of $\Q(E[p])$, say, then $\Q(E[p])$ would also contain $\Q(\sqrt{D_2})$, a contradiction.) The first claim now follows from Proposition \ref{LKlemma}, and the second is due to the fact that $\Q(\sqrt{(-1)^{\frac{p-1}{2}}p})\subseteq \Q(E[p])$ and $\Q(\sqrt{(-1)^{\frac{q-1}{2}}q})\subseteq \Q(E[q])$. \end{proof}

\begin{remark}\label{quadexfull}\nf 
The existence of multiple distinct quadratic subfields of $p$ and $q$-torsion fields is directly related to whether or not the primes $p$ and $q$ are exceptional for $E$. In what follows, we write $\Q(\sqrt{p^*})$ for the unique quadratic subfield of $\Q(\zeta_p)$. There are three cases to consider:
\begin{enumerate}

\item The only quadratic subfields of $\Q(E[p])$ and $\Q(E[q])$ are those forced by the Weil pairing, i.e., the fields $\Q(\sqrt{p^*})$ and $\Q(\sqrt{q^*})$ contained in the cyclotomic parts of $\Q(E[p])$ and $\Q(E[q])$. In this case, base extending by $L=\Q(\sqrt{p^*q^*})$ forces a quadratic $(p,q)$-entanglement. (If $p=2$, replace $\Q(\sqrt{p^*})$ by $\Q(\sqrt{\Delta_E})$.)

\item Without loss of generality, only the $p$-torsion field contains a quadratic subfield $\Q(\sqrt{D})$ which is not forced by the Weil pairing. Then base extending to either $\Q(\sqrt{Dq^*})$ or $\Q(\sqrt{p^*q^*})$ will force quadratic $(p,q)$-entanglement. 

\item  Both $\Q(E[p])$  and $\Q(E[q])$ have distinct quadratic subfields, call them $\Q(\sqrt{D})$ and $\Q(\sqrt{D'})$, different from those forced by the Weil pairing. Then base-extending to any of the fields  $\Q(\sqrt{DD'})$, $\Q(\sqrt{ Dq^*})$, $\Q(\sqrt{D'p^*})$, or $\Q(\sqrt{p^*q^*})$ yields quadratic $(p,q)$-entanglement. 

\end{enumerate}
Since $\GL_2(\Z/p\Z)$ and $\GL_2(\Z/q\Z)$ have unique index 2 subgroups, the latter two cases can only occur when only $p$ (case (2)), or both $p$ and $q$ (case (3)), are \emph{exceptional} primes. (Recall that a prime is called exceptional for $E$ if the residual Galois representation at that prime is not surjective.) See Example \ref{quadex} for a specific curve illustrating case (2) above.
\end{remark}

 \begin{corollary} Suppose that $E/\Q$ has no $(p,q)$-entanglement over $\Q$. Then
$$
T_{p,q}(E/\Q(\zeta_{pq})^+)=\Z/2\Z
$$
where $\Q(\zeta_{pq})^+$  is the maximal real subfield of $\Q(\zeta_{pq})$.
\end{corollary}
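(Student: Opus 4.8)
The plan is to apply Proposition \ref{LKlemma} to the base field $L=\Q(\zeta_{pq})^+$, which I first observe is a genuine subfield of $\Q(E[pq])$: by the Weil pairing $\Q(\zeta_p)\subseteq\Q(E[p])$ and $\Q(\zeta_q)\subseteq\Q(E[q])$, so $\Q(\zeta_{pq})=\Q(\zeta_p)\Q(\zeta_q)\subseteq\Q(E[pq])$ and hence $L\subseteq\Q(\zeta_{pq})\subseteq\Q(E[pq])$. (Throughout I take $p$ and $q$ odd, as in Corollary \ref{quadprop}; for $p=2$ one has $L\subseteq\Q(E[q])$ and the clean disjointness below fails.) Since $E$ has no $(p,q)$-entanglement over $\Q$, we have $\#T_{p,q}(E/\Q)=1$, so Proposition \ref{LKlemma} gives
$$
\#T_{p,q}(E/L)=\frac{[L:\Q]}{[\Q(E[p])\cap L:\Q]\,[\Q(E[q])\cap L:\Q]},
$$
and, as $[L:\Q]=\tfrac12(p-1)(q-1)$ is known, it remains to show the right-hand side equals $2$; by symmetry it suffices to compute $\Q(E[p])\cap L$.

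The key step, which I expect to be the main obstacle, is to prove $\Q(E[p])\cap L=\Q(\zeta_p)^+$, the maximal real subfield of $\Q(\zeta_p)$, of degree $\tfrac12(p-1)$. The inclusion $\supseteq$ is immediate. For the reverse I would first establish that $\Q(E[p])\cap\Q(\zeta_{pq})=\Q(\zeta_p)$. Writing $F$ for this intersection, the no-entanglement hypothesis gives $F\cap\Q(\zeta_q)\subseteq\Q(E[p])\cap\Q(E[q])=\Q$; since $F$ and $\Q(\zeta_q)$ are subfields of the abelian (hence Galois) extension $\Q(\zeta_{pq})/\Q$, they are linearly disjoint over $\Q$, whence $[F:\Q](q-1)=[F\Q(\zeta_q):\Q]\le[\Q(\zeta_{pq}):\Q]=(p-1)(q-1)$ and so $[F:\Q]\le p-1$. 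Combined with $\Q(\zeta_p)\subseteq F$, this forces $F=\Q(\zeta_p)$. Because $L\subseteq\Q(\zeta_{pq})$, I then obtain $\Q(E[p])\cap L=F\cap L=\Q(\zeta_p)\cap\Q(\zeta_{pq})^+$, and intersecting $\Q(\zeta_p)$ with the totally real field $\Q(\zeta_{pq})^+=\Q(\zeta_{pq})\cap\R$ cuts out exactly $\Q(\zeta_p)\cap\R=\Q(\zeta_p)^+$.

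With the two intersection degrees equal to $\tfrac12(p-1)$ and $\tfrac12(q-1)$, the displayed quotient becomes
$$
\frac{\tfrac12(p-1)(q-1)}{\tfrac12(p-1)\cdot\tfrac12(q-1)}=2,
$$
so $\#T_{p,q}(E/L)=2$ and therefore $T_{p,q}(E/\Q(\zeta_{pq})^+)\cong\Z/2\Z$. The only genuinely nontrivial input is the linear-disjointness and degree-counting argument isolating $\Q(E[p])\cap\Q(\zeta_{pq})=\Q(\zeta_p)$; once that is in place the remainder is routine bookkeeping with cyclotomic degrees.
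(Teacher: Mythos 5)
Your proof is correct, and it follows the same overall skeleton as the paper's: both reduce the corollary to showing $\Q(E[p])\cap\Q(\zeta_{pq})^+=\Q(\zeta_p)^+$ (and symmetrically for $q$) and then feed these intersections into Proposition \ref{LKlemma} to get the degree count $\tfrac{(p-1)(q-1)/2}{\frac{p-1}{2}\cdot\frac{q-1}{2}}=2$. Where you diverge is in how that key intersection is established. The paper proves it via its Lemma \ref{maxreallem}, a purely group-theoretic argument carried out inside $\Gal(\Q(\zeta_{pq})^+/\Q)\cong\Z/\tfrac{p-1}{2}\Z\times\Z/\tfrac{q-1}{2}\Z$: any intermediate field $\Q(\zeta_p)^+\subseteq F\subseteq\Q(\zeta_{pq})^+$ with $F\cap\Q(\zeta_q)^+=\Q$ must equal $\Q(\zeta_p)^+$, since the subgroup $H$ fixing $F$ together with the $\Z/\tfrac{p-1}{2}\Z$ factor would otherwise fix a field strictly larger than $\Q$. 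You instead work one level up, in the full cyclotomic field: you first pin down $\Q(E[p])\cap\Q(\zeta_{pq})=\Q(\zeta_p)$ using the standard fact that two Galois extensions of $\Q$ with trivial intersection are linearly disjoint (valid here since all fields in sight sit inside the abelian extension $\Q(\zeta_{pq})/\Q$), plus a degree count, and only then descend by intersecting with $\R$. Your route avoids introducing a bespoke lemma about the real cyclotomic lattice and yields the slightly cleaner intermediate statement $\Q(E[p])\cap\Q(\zeta_{pq})=\Q(\zeta_p)$ as a byproduct; the paper's lemma is tailored exactly to the real subfields it needs and so plugs in with no further manipulation. One point in your favor worth noting: you make explicit the hypothesis that $p$ and $q$ are odd, observing that for $p=2$ the statement degenerates (the entanglement over $\Q(\zeta_{2q})^+=\Q(\zeta_q)^+$ is then trivial, not $\Z/2\Z$); the paper leaves this assumption implicit, and indeed its Lemma \ref{maxreallem} only makes sense for odd $p$ and $q$.
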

\begin{proof} Since $E$ has no $(p,q)$-entanglement over $\Q$, Lemma \ref{maxreallem}  implies
\begin{align*}
\Q(E[p])\cap \Q(\zeta_{pq})^+&=\Q(\zeta_p)^+\\
\Q(E[q])\cap \Q(\zeta_{pq})^+&=\Q(\zeta_q)^+.
\end{align*}
It follows from Proposition \ref{LKlemma} that 
$$
\#T_{p,q}(E/\Q(\zeta_{pq})^+)=\frac{[\Q(\zeta_{pq})^+:\Q]}{[\Q(\zeta_{p})^+:\Q][\Q(\zeta_{q})^+:\Q]}=2.
$$
\end{proof}

\begin{lemma}\label{maxreallem} If $\Q(\zeta_p)^+\subseteq F\subseteq \Q(\zeta_{pq})^+$ and $F\cap \Q(\zeta_q)^+=\Q$ then $F=\Q(\zeta_p)^+$. 
\end{lemma}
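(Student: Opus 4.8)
The plan is to recast everything inside the Galois group of the ambient cyclotomic field and argue group-theoretically. Let $M=\Q(\zeta_{pq})$ and identify $G:=\Gal(M/\Q)$ with $(\Z/p\Z)^\times\times(\Z/q\Z)^\times$ in the usual way, writing $c_p$ and $c_q$ for the order-two elements $-1\Mod p$ and $-1\Mod q$, so that complex conjugation is $c=(c_p,c_q)$. Under the Galois correspondence the three relevant fields are the fixed fields of
$$\Q(\zeta_p)^+\leftrightarrow \langle c_p\rangle\times(\Z/q\Z)^\times,\quad \Q(\zeta_q)^+\leftrightarrow (\Z/p\Z)^\times\times\langle c_q\rangle,\quad \Q(\zeta_{pq})^+\leftrightarrow\langle c\rangle.$$
First I would set $H:=\Gal(M/F)$; the nesting $\Q(\zeta_p)^+\subseteq F\subseteq\Q(\zeta_{pq})^+$ then becomes the (inclusion-reversed) sandwich $\langle c\rangle\subseteq H\subseteq\langle c_p\rangle\times(\Z/q\Z)^\times$, so the task is to pin down $H$ and show it is forced to be the top group.

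Next I would translate the disjointness hypothesis. Since the intersection of two subfields corresponds to the subgroup they generate, $F\cap\Q(\zeta_q)^+=\Q$ is equivalent to $\langle H,\,(\Z/p\Z)^\times\times\langle c_q\rangle\rangle=G$. Absorbing the factor $(\Z/p\Z)^\times\times\{1\}$, the left-hand side equals $(\Z/p\Z)^\times\times\langle \operatorname{pr}_q(H),c_q\rangle$, where $\operatorname{pr}_q$ is projection onto $(\Z/q\Z)^\times$; and because $c\in H$ gives $c_q\in\operatorname{pr}_q(H)$ for free, the condition collapses to $\operatorname{pr}_q(H)=(\Z/q\Z)^\times$. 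A companion degree count gives an a priori bound in the other direction: $F$ and $\Q(\zeta_q)^+$ are both abelian over $\Q$, so $F\cap\Q(\zeta_q)^+=\Q$ forces linear disjointness, whence $[F:\Q]\cdot\tfrac{q-1}{2}=[F\,\Q(\zeta_q)^+:\Q]\le[\Q(\zeta_{pq})^+:\Q]=\tfrac{(p-1)(q-1)}{2}$ and therefore $[F:\Q(\zeta_p)^+]\le 2$; equivalently $H$ has index at most $2$ in $\langle c_p\rangle\times(\Z/q\Z)^\times$.

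It remains to promote this to an equality, and this is the step I expect to be the main obstacle. Since $\operatorname{pr}_q(H)$ is already all of $(\Z/q\Z)^\times$, the only way $H$ could be a proper index-two subgroup is for $\operatorname{pr}_q|_H$ to be an isomorphism, i.e. for $H$ to be the graph $\{(\psi(b),b):b\in(\Z/q\Z)^\times\}$ of a homomorphism $\psi\colon(\Z/q\Z)^\times\to\langle c_p\rangle$, in which case $c\in H$ forces $\psi(c_q)=c_p$. So the whole lemma reduces to excluding such a $\psi$, and this is genuinely delicate: as $(\Z/q\Z)^\times$ is cyclic of even order, its unique nontrivial quadratic character satisfies $\psi(c_q)=(-1)^{(q-1)/2}$, which equals $c_p$ exactly when $q\equiv 3\Mod 4$. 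The graph subgroup therefore does occur for such $q$ (for instance $(p,q)=(5,7)$, where $\Q(\zeta_{35})^+/\Q$ is cyclic of degree $12$ and its degree-$4$ subfield is a genuine intermediate field meeting $\Q(\zeta_7)^+$ trivially), so the conclusion cannot follow from the stated hypotheses alone. To close the argument I would either impose $q\equiv 1\Mod 4$, or — as is in fact the situation in the application to the preceding corollary — use the extra input that $F\subseteq\Q(E[p])$ together with the no-entanglement assumption $\Q(E[p])\cap\Q(E[q])=\Q$, which pins down $\Q(E[p])\cap\Q(\zeta_{pq})=\Q(\zeta_p)$ via the linear disjointness of $\Q(\zeta_p)$ and $\Q(\zeta_q)$, and hence $F=\Q(\zeta_p)^+$. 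I would flag this as the precise point where the statement, as written, appears to need an additional hypothesis.
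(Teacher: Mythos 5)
Your analysis is correct, and it uncovers a genuine error in the paper: Lemma \ref{maxreallem} is false as stated. Your counterexample checks out. For $(p,q)=(5,7)$, the group $\Gal(\Q(\zeta_{35})^+/\Q)$ is cyclic of order $12$; its unique degree-$4$ subfield contains the unique quadratic subfield $\Q(\sqrt{5})=\Q(\zeta_5)^+$, meets the unique cubic subfield $\Q(\zeta_7)^+$ trivially, and yet properly contains $\Q(\zeta_5)^+$. Your graph-subgroup obstruction is also exactly right: for distinct odd primes, the failing subgroups are precisely the graphs of the quadratic character of $(\Z/q\Z)^\times$, which contain complex conjugation precisely when $q\equiv 3\Mod{4}$, so the lemma holds if and only if $q\equiv 1\Mod{4}$.

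For comparison, the paper's own proof identifies $\Gal(\Q(\zeta_{pq})^+/\Q(\zeta_p)^+)$ and $\Gal(\Q(\zeta_{pq})^+/\Q(\zeta_q)^+)$ with $\Z/\frac{q-1}{2}\Z$ and $\Z/\frac{p-1}{2}\Z$ and then writes $\langle H,\Z/\frac{p-1}{2}\Z\rangle=H\times\Z/\frac{p-1}{2}\Z$, i.e.\ it treats $\Gal(\Q(\zeta_{pq})^+/\Q)$ as the internal direct product of these two subgroups. Both steps fail: the correct degrees are $[\Q(\zeta_{pq})^+:\Q(\zeta_p)^+]=q-1$ and $[\Q(\zeta_{pq})^+:\Q(\zeta_q)^+]=p-1$, and the two subgroups intersect in a subgroup of order $2$, namely the one fixing the compositum $\Q(\zeta_p)^+\Q(\zeta_q)^+$, which has index $2$ in $\Q(\zeta_{pq})^+$. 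In effect the paper's argument assumes $\Q(\zeta_{pq})^+=\Q(\zeta_p)^+\Q(\zeta_q)^+$; ironically, the failure of that equality is exactly what produces the $\Z/2\Z$-entanglement in the corollary the lemma is meant to serve. Your proposed repair of that corollary is the right one: no entanglement plus the Weil pairing gives $\Q(E[p])\cap\Q(\zeta_{pq})=\Q(\zeta_p)$ --- this is the non-real analogue of the lemma, which \emph{is} true because $\Gal(\Q(\zeta_{pq})/\Q)$ genuinely is the direct product $\Gal(\Q(\zeta_{pq})/\Q(\zeta_q))\times\Gal(\Q(\zeta_{pq})/\Q(\zeta_p))$ --- and then intersecting with $\Q(\zeta_{pq})^+$ yields $\Q(E[p])\cap\Q(\zeta_{pq})^+=\Q(\zeta_p)\cap\Q(\zeta_{pq})^+=\Q(\zeta_p)^+$. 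So the corollary survives, but the lemma itself needs either the hypothesis $q\equiv 1\Mod{4}$ or to be restated in the non-real form.
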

\begin{proof}  Identify $\Gal(\Q(\zeta_{pq})^+/\Q(\zeta_p)^+)$ and $\Gal(\Q(\zeta_{pq})^+/\Q(\zeta_q)^+)$ with $\Z/\frac{q-1}{2}\Z$ and $\Z/\frac{p-1}{2}\Z$, respectively. If $F$ properly contains $\Q(\zeta_p)^+$ then $H=\Gal(\Q(\zeta_{pq})^+/F)$ is a proper subgroup of $\Z/\frac{q-1}{2}\Z$. But the fixed field of $\langle H, \Z/\frac{p-1}{2}\Z \rangle=H\times \Z/\frac{p-1}{2}\Z$ is $F\cap \Q(\zeta_q)^+=\Q$, and this forces $H=\Z/\frac{q-1}{2}\Z$, a contradiction. 
\end{proof}

\section{Entangleable Fields} \label{entfieldsec}

 In this section we consider the notion of \emph{entangleable fields}.  Roughly speaking, a pair of fields are entangleable if they can be `glued together' by some auxilliary field. The relation between field entangleability and that of elliptic curves will be discussed in \S\ref{ellcurveent}. 
 
 Let $F_1$ and $F_2$ be finite extensions of a fixed algebraic extension $K/\Q$. 

\begin{definition} \nf We say that $F_1$ and $F_2$ are \emph{entangleable over $K$} if there exists some $L/K$ such that $LF_1=LF_2\supsetneq L$. The group $\Gal(LF_i/L)$ is called the \emph{entanglement type} of $F_1$ and $F_2$ with respect to $L$.
\end{definition}

Usually the field $K$ is clear from context, in which case we simply say that $F_1$ and $F_2$ are \emph{entangleable}. 

\begin{example}\label{quadex1} \nf Let $D_1$ and $D_2$ be distinct  squarefree integers. Then $\Q(\sqrt{D_1})$ and $\Q(\sqrt{D_2})$ are entangleable by $\Q(\sqrt{D_1D_2})$. This is easy to see directly from the definition, though we show it using Galois theory as well in Example \ref{groupquad} below.
\end{example}

 \begin{remark}\label{selfent}\nf  In the case where $F:=F_1=F_2$, we see that $F$ is \emph{self-entangleable} in the sense that $LF_1=LF_2=F$ has positive degree over $L$ for any proper subextension  $K\subseteq L\subsetneq F$.
 \end{remark}

We focus the remainder of this section on determining various sufficient conditions that will allow us to conclude when a pair of fields are entangleable. We begin by observing that not all field pairs are entangleable.

\begin{proposition}\label{coprime} If $[F_1:K]$ and $[F_2:K]$ are coprime then $F_1$ and $F_2$ are not entangleable. 
\end{proposition}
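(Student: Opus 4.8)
The plan is to prove the contrapositive via a degree argument: if $F_1$ and $F_2$ are entangleable, then $[F_1:K]$ and $[F_2:K]$ share a common factor exceeding $1$. Suppose $F_1$ and $F_2$ are entangleable, so there is some $L/K$ with $LF_1 = LF_2 \supsetneq L$. Set $E := LF_1 = LF_2$ and let $d := [E:L] > 1$. The key observation is that $d$ must simultaneously divide both $[F_1:K]$ and $[F_2:K]$; once this is established, $\gcd([F_1:K],[F_2:K]) \geq d > 1$, contradicting coprimality and completing the proof.

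The heart of the matter is therefore the divisibility $[LF_i:L] \mid [F_i:K]$. First I would recall the standard compositum fact already used elsewhere in the paper (in the proof of Proposition \ref{sizebound}): for a compositum $LF_i$ over $K$ one has the multiplicativity relation
$$
[LF_i:L] = [F_i : F_i \cap L] \quad\text{divides}\quad [F_i:K],
$$
since $F_i \cap L$ is an intermediate field $K \subseteq F_i \cap L \subseteq F_i$, so $[F_i:F_i\cap L]$ divides $[F_i:K]$ by the tower law. Strictly speaking the clean identity $[LF_i:L]=[F_i:F_i\cap L]$ requires $F_i/K$ (or $L/K$) to be Galois; without that hypothesis one only has the inequality $[LF_i:L]\le [F_i:K]$, but the divisibility $[LF_i:L]\mid [F_i:K]$ still holds in general because $[LF_i:L]$ divides $[F_i:K]$ can be deduced from the tower $L \subseteq LF_i$ together with $F_i \subseteq LF_i$ and the fact that $[LF_i:L]$ and $[LF_i:F_i]$ multiply to $[LF_i:K]/[L\cap F_i:K]$-type bookkeeping. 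Applying this to both $i=1$ and $i=2$ gives $d \mid [F_1:K]$ and $d \mid [F_2:K]$.

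I expect the main obstacle to be technical rather than conceptual: making the divisibility $[LF_i:L]\mid[F_i:K]$ airtight \emph{without} assuming $F_i/K$ is Galois, since in full generality compositum degrees need not be multiplicative. The safest route is to avoid exact degree formulas entirely and argue purely with divisibility: for any finite extensions, $[LF_i:L]$ divides $[F_i:K]$ because a generating set of $F_i$ over $K$ also generates $LF_i$ over $L$, so the minimal polynomials over $L$ divide those over $K$, forcing $[LF_i:L] \le [F_i:K]$, and one upgrades this to divisibility by passing to a Galois closure of $F_i/K$ and tracking subgroup indices. Alternatively, since the paper's subsequent applications concern separable (indeed Galois) number-field extensions, it may be cleanest to simply invoke that $F_i/K$ is separable so the relevant intersection identity holds; I would state the divisibility lemma in the generality actually needed and cite the tower law, keeping the write-up short.
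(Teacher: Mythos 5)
Your core argument coincides with the paper's proof: assuming $LF_1=LF_2\supsetneq L$, set $d=[LF_i:L]$, identify $d$ with $[F_i:F_i\cap L]$, and conclude via the tower law that $d$ divides both $[F_i:K]$, contradicting coprimality. The problem lies in your attempt to remove the Galois hypothesis from that identification. The divisibility $[LF_i:L]\mid [F_i:K]$ is simply false for general finite extensions, and no amount of ``passing to a Galois closure and tracking subgroup indices'' can rescue it. Concretely, take $K=\Q$, $F_1=\Q(\sqrt[3]{2})$, $F_2=\Q(\omega)$ with $\omega$ a primitive cube root of unity, and $L=\Q(\omega\sqrt[3]{2})$. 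Then $LF_1=LF_2=\Q(\sqrt[3]{2},\omega)\supsetneq L$, so $F_1$ and $F_2$, of coprime degrees $3$ and $2$, \emph{are} entangleable in the sense of the paper's definition; correspondingly $[LF_1:L]=2$ does not divide $[F_1:K]=3$, and $[LF_1:L]\neq[F_1:F_1\cap L]=3$. Since this example lives in characteristic zero, your fallback suggestion that separability suffices is also wrong: what the identity $[LF_i:L]=[F_i:F_i\cap L]$ needs is normality, exactly as your own parenthetical remark suspected before you talked yourself out of it.

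To be fair, the paper's one-line proof asserts $d=[F_1:F_1\cap L]=[F_2:F_2\cap L]$ with no justification, and that step carries the same implicit Galois assumption; the example above shows the proposition itself fails if the $F_i$ are arbitrary finite extensions of $K$. In the paper's actual uses of this circle of ideas (Proposition \ref{entdeg}, Theorem \ref{ent-comp}, Theorem \ref{main1}) the relevant fields are Galois over $K$, and under that hypothesis your argument --- and the paper's --- is correct: $F_i/(F_i\cap L)$ is then Galois, the identity $[LF_i:L]=[F_i:F_i\cap L]$ holds, and the tower law gives $d\mid[F_i:K]$. So the right repair is not to ``upgrade the inequality $[LF_i:L]\le[F_i:K]$ to a divisibility'' (impossible in general), but to invoke normality of $F_i/K$ (or of $L/K$) explicitly; as written, your patches constitute a genuine gap.
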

\begin{proof} Suppose they are entangleable by some $L$. Then the integer $d=[LF_1:L]$ is larger than 1 by assumption, but since we can also write $d=[F_1:F_1\cap L]=[F_2:F_2\cap L]$, we see that $d$ is a positive divisor of  $[F_1:K]$ and $[F_2:K]$, a contradiction. \,\,
\end{proof}

The coprimality hypothesis in Proposition \ref{coprime} is essentially the only obstruction to fields being entangleable (see Proposition \ref{groupcomp} below). In order to see this, we recast field entangleability in group-theoretic terms. Let $F/K$ be any finite Galois extension containing both $F_1$ and $F_2$ and write $G=\Gal(F/K)$, $G_1= \Gal(F/F_1)$, and  $G_2= \Gal(F/F_2)$.

\begin{lemma}\label{group-ent} $F_1$ and $F_2$ are entangleable by a subfield $L\subseteq F$ if and only if there exists a subgroup $H\leq G$ such that $H\cap G_1=H\cap G_2\subsetneq H$. If this is the case, then $L$ and $H$ are related by the correspondence
$$
L=F^H\longleftrightarrow H=\Gal(F/L).
$$
Furthermore, if $L/K$ or $F_i/K$ is Galois then $\Gal(LF_i/L)\cong H/(H\cap G_i)$.
\end{lemma}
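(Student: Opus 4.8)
The plan is to transport the entire statement through the fundamental theorem of Galois theory applied to the Galois extension $F/K$. Under the correspondence $L=F^H\leftrightarrow H=\Gal(F/L)$, composita of intermediate fields correspond to intersections of the associated subgroups; in particular, since $L=F^H$ and $F_i=F^{G_i}$, we have $LF_i=F^{H\cap G_i}$ and hence $\Gal(F/LF_i)=H\cap G_i$ for $i=1,2$. I would first record the two resulting translations: the equality $LF_1=LF_2$ is equivalent to $H\cap G_1=H\cap G_2$ (equal intermediate fields correspond to equal subgroups), and the strict containment $LF_i\supsetneq L$ is equivalent to $H\cap G_i\subsetneq H$ (a proper subfield corresponds to a strictly larger subgroup). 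Since $F_1$ and $F_2$ are entangleable by $L$ precisely when $LF_1=LF_2\supsetneq L$, combining these equivalences yields the asserted biconditional, and the displayed bijection $L=F^H\leftrightarrow H=\Gal(F/L)$ is the Galois correspondence itself.

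For the final isomorphism I would use the restriction homomorphism. Assume first that $F_i/K$ is Galois, so that $G_i\trianglelefteq G$; then $H\cap G_i\trianglelefteq H$ and the extension $LF_i/L$ is itself Galois. Because $F/L$ is Galois and $LF_i$ is an intermediate field, restriction of automorphisms defines a \emph{surjective} homomorphism $\Gal(F/L)\twoheadrightarrow\Gal(LF_i/L)$, whose kernel consists of exactly those automorphisms fixing $LF_i$, namely $\Gal(F/LF_i)=H\cap G_i$. The first isomorphism theorem then gives
$$
\Gal(LF_i/L)\cong\Gal(F/L)/\Gal(F/LF_i)=H/(H\cap G_i),
$$
as required. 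The case in which $L/K$ is Galois is handled by the same restriction argument once the relevant normality is in place.

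The step I expect to be the main obstacle — and the reason one of $L/K$, $F_i/K$ is assumed Galois — is guaranteeing that $LF_i/L$ is genuinely a Galois extension, equivalently that $H\cap G_i$ is normal in $H$, so that the quotient $H/(H\cap G_i)$ is an honest group and $\Gal(LF_i/L)$ is meaningful. This normality is not automatic for arbitrary $H$ and $G_i$. When $F_i/K$ is Galois it is immediate from $G_i\trianglelefteq G$; when $L/K$ is Galois one has $H\trianglelefteq G$, which secures $H\cap G_i\trianglelefteq G_i$, and the normality $H\cap G_i\trianglelefteq H$ needed for the stated quotient holds automatically in the abelian settings arising in our applications. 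Everything else reduces to the Galois correspondence together with the degree relation $[LF_i:L]=[H:H\cap G_i]$, which also confirms that the orders on the two sides agree.
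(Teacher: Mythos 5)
Your handling of the main equivalence and of the case ``$F_i/K$ Galois'' is correct and is exactly the paper's (one-line) argument made explicit: under the Galois correspondence for $F/K$, composita correspond to intersections, so $LF_i=F^{H\cap G_i}$, the biconditional falls out, and when $G_i\trianglelefteq G$ one gets $H\cap G_i\trianglelefteq H$, so $LF_i/L$ is Galois and restriction plus the first isomorphism theorem gives $\Gal(LF_i/L)\cong H/(H\cap G_i)$.

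The gap is in the case where only $L/K$ is Galois, and you have in fact put your finger on it yourself: from $H\trianglelefteq G$ one only gets $H\cap G_i\trianglelefteq G_i$, and your closing appeal to ``abelian settings arising in our applications'' is not a proof of the lemma as stated. Moreover, this gap cannot be closed, because the assertion fails in that generality. Take $F/K$ with $G=\Gal(F/K)\cong S_4$, and set $H=A_4$ (so $L=F^H$ is quadratic, hence Galois over $K$), $G_1=\langle(1\,2\,3\,4)\rangle$, $G_2=\langle(1\,3)(2\,4)\rangle$. Then $H\cap G_1=H\cap G_2=\langle(1\,3)(2\,4)\rangle\subsetneq H$, so the entanglement hypothesis of the lemma holds; but $\langle(1\,3)(2\,4)\rangle$ is not normal in $A_4$, so $LF_i/L$ is not Galois, $H/(H\cap G_i)$ is not even a group, and $\Aut(LF_i/L)\cong N_H(H\cap G_i)/(H\cap G_i)$ has order $2$ while $[LF_i:L]=[H:H\cap G_i]=6$. (What the hypothesis $H\trianglelefteq G$ actually buys is $\Gal(LF_i/F_i)\cong G_i/(H\cap G_i)$, i.e.\ Galoisness over $F_i$, not over $L$.) To be fair, the paper's own proof---a single appeal to the Galois correspondence---does not address this either, and every later use of the ``furthermore'' clause (Example \ref{S3}, Proposition \ref{entdeg}, Theorem \ref{2qclassification}) invokes it only with $F_i/K$ Galois, so nothing downstream is affected. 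But your write-up should either restrict the final claim to the $F_i/K$-Galois case or explicitly flag the $L/K$-Galois case as false, rather than defer the missing normality to ``applications.''
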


\begin{proof} This follows trivially from the Galois correspondence and the fact that the compositum of two fields corresponds to the intersection of the corresponding groups. 
\end{proof}

In the next few examples, we illustrate the utility of Lemma \ref{group-ent}.  

\begin{example}\label{groupquad} \nf Let $D_1$ and $D_2$ be squarefree integers. Let $\sigma$ and $\tau$ be generators of $G=\Gal(\Q(\sqrt{D_1},\sqrt{D_2})/\Q)$ that restrict to the nontrivial elements of $\Gal(\Q(\sqrt{D_1})/\Q)$ and $\Gal(\Q(\sqrt{D_2})/\Q)$, respectively. Then the subgroup $H\leq G$ generated by $\sigma\tau$ trivially intersects $G_i=\Gal(\Q(\sqrt{D_1},\sqrt{D_2})/\Q(\sqrt{D_i}))$, hence $\Q(\sqrt{D_1})$ and $\Q(\sqrt{D_2})$ are entangleable by $\Q(\sqrt{D_1},\sqrt{D_2})^H=\Q(\sqrt{D_1D_2})$.
\end{example}

\begin{example}\label{cyclotomic}\nf Fix positive integers $n$ and $m$, neither of which are equal to $2$. The fields $\Q(\zeta_n)$ and $\Q(\zeta_m)$ are entangleable by $\Q(\zeta_{mn})^+:=\Q(\zeta_{nm}+\zeta_{nm}^{-1})$, the maximal real subfield of $\Q(\zeta_{nm})$. (If $n=m$ this is obvious -- see Remark \ref{selfent}.) To see this, note that the group $G=\Gal(\Q(\zeta_{nm})/\Q)$ is canonically isomorphic to $(\Z/(\phi(nm)\Z))^\times$, wherein $G_1=\Gal(\Q(\zeta_{nm})/\Q(\zeta_n))$ and $G_2=\Gal(\Q(\zeta_{nm})/\Q(\zeta_m))$ are identified with integers congruent to 1 mod $n$ and $m$, respectively. One can now check that the subgroup $H\leq G$ generated by $-1$ (complex conjugation) satisfies the hypotheses of Lemma \ref{group-ent} (again one has $G_i\cap H=1$). 
\end{example}

\begin{example}\label{S3}\nf Let $F_1$ and $F_2$ be any two distinct $S_3$-extensions of $\Q$ and set $F=F_1F_2$. Suppose that $F_1\cap F_2$ is quadratic over $\Q$. Then
$$
G=\Gal(F/\Q)\cong S_3\times_{\Z/2\Z}S_3.
$$
Letting $G_i=\Gal(F/F_i)$, we compute in Magma that  there are exactly five subgroups $H_1, \dots, H_5\leq G$, satisfying the hypotheses of Lemma \ref{group-ent}. They all intersect $G_1$ and $G_2$ trivially and their isomorphism classes are:
\begin{itemize}
\item $H_1=\Z/2\Z$
\item $H_2=H_2=\Z/3\Z$
\item $H_4=H_5=S_3$.
\end{itemize}
It follows that there are five subfields $L_i=F^{H_i}$, each of which entangle $F_1$ and $F_2$, and the entanglement types are:
\begin{itemize}
\item $\Gal(L_1F_1/L_1)=\Z/2\Z$
\item $\Gal(L_iF_1/L_i)=\Z/3\Z$ for $i=2,3$
\item $\Gal(L_iF_1/L_i)=S_3$ for $i=4,5$.
\end{itemize}
(Since $LF_1=LF_2$, each of the above also holds for $F_2$.) 
\end{example}

\subsection{Criteria for field entangleability}

\begin{proposition}\label{entdeg} 
Suppose that $F_1$ and $F_2$ are Galois over $K$ and that  $d:= \gcd([F_1:F_1\cap F_2],[F_2:F_1\cap F_2])>1$. For each prime prime $\ell\mid d$ there is a subfield $L\subseteq F_1F_2$ such that $F_1$ and $F_2$ are entangleable by $L$ and $[LF_i:L]=\ell$.
\end{proposition}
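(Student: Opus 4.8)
The plan is to translate everything into the group-theoretic language of Lemma \ref{group-ent} and then exhibit an explicit cyclic subgroup $H$ of order $\ell$. First I would take $F=F_1F_2$, which is Galois over $K$ since it is a compositum of Galois extensions, and set $G=\Gal(F/K)$, $G_1=\Gal(F/F_1)$, and $G_2=\Gal(F/F_2)$. Because $F_1$ and $F_2$ are Galois over $K$, both $G_1$ and $G_2$ are normal in $G$, and since $F_1F_2=F$ we have $G_1\cap G_2=1$. Two normal subgroups with trivial intersection commute elementwise, so $G_1G_2\cong G_1\times G_2$, and this group is exactly $\Gal(F/F_1\cap F_2)$.

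Next I would pin down the meaning of $d$ group-theoretically. Using the Galois correspondence and the product formula for subgroups,
\[
[F_1:F_1\cap F_2]=[G_1G_2:G_1]=[G_2:G_1\cap G_2]=|G_2|,
\]
and symmetrically $[F_2:F_1\cap F_2]=|G_1|$. Hence $d=\gcd(|G_1|,|G_2|)$, so a prime $\ell\mid d$ divides both $|G_1|$ and $|G_2|$.

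With this in hand the construction is short. By Cauchy's theorem pick $g_1\in G_1$ and $g_2\in G_2$ each of order $\ell$. Since $G_1$ and $G_2$ commute elementwise and meet trivially, the product $g_1g_2$ has order exactly $\ell$ (inside $\langle g_1\rangle\times\langle g_2\rangle\cong\Z/\ell\Z\times\Z/\ell\Z$ it is a diagonal generator). Set $H=\langle g_1g_2\rangle$. A power $(g_1g_2)^k=g_1^kg_2^k$ lies in $G_1$ only if $g_2^k\in G_1\cap G_2=1$, i.e.\ only if $\ell\mid k$; thus $H\cap G_1=1$, and symmetrically $H\cap G_2=1$. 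Therefore $H\cap G_1=H\cap G_2=1\subsetneq H$, so Lemma \ref{group-ent} applies: the field $L=F^H\subseteq F_1F_2$ entangles $F_1$ and $F_2$, and since $F_i/K$ is Galois we get $\Gal(LF_i/L)\cong H/(H\cap G_i)=H\cong\Z/\ell\Z$, giving $[LF_i:L]=\ell$ as required.

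There is no serious obstacle here; the argument is essentially a direct application of Lemma \ref{group-ent} together with Cauchy's theorem. The one place requiring care is the bookkeeping in the first two paragraphs: verifying that normality of $G_1,G_2$ forces elementwise commuting and the internal direct product structure, and correctly matching $[F_i:F_1\cap F_2]$ with $|G_{3-i}|$, so that the hypothesis $\ell\mid d$ translates precisely into $\ell$ dividing both $|G_1|$ and $|G_2|$, which is exactly what is needed to produce the order-$\ell$ elements $g_1$ and $g_2$.
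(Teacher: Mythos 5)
Your proof is correct and follows essentially the same route as the paper's: both use Cauchy's theorem to produce a diagonal cyclic subgroup $H$ of order $\ell$ that meets $G_1$ and $G_2$ trivially, and then conclude via Lemma \ref{group-ent}, using the Galois hypothesis to identify $\Gal(LF_i/L)\cong H/(H\cap G_i)$. The only difference is bookkeeping: the paper phrases the construction through the fibre product $\Gal(F_1/K)\times_{\Gal(F_1\cap F_2/K)}\Gal(F_2/K)$ and picks order-$\ell$ elements of $\Gal(F_i/F_1\cap F_2)$, whereas you work internally with $G_1G_2\cong G_1\times G_2$ inside $\Gal(F_1F_2/K)$ (your generator $g_1g_2$ corresponds exactly to the paper's fibre-product element, since restriction gives $\Gal(F_1F_2/F_2)\cong\Gal(F_1/F_1\cap F_2)$), and your careful matching of $[F_i:F_1\cap F_2]$ with $|G_{3-i}|$ correctly replaces the paper's direct use of the hypothesis.
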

\begin{proof} Recall that $G=\Gal(F_1F_2/K)$ is isomorphic to the fibre product
$$
\Gal(F_1/K) \times_{\Gal(F_1\cap F_2/K)} \Gal(F_2/K)=\{(g_1,g_2)\mid g_1|_{F_1\cap F_2}=g_2|_{F_1\cap F_2}\},
$$
under the mapping $g\mapsto (g|_{F_1},g|_{F_2})$. Let $\ell$ be a prime divisor of $d$. By Cauchy's theorem, there are elements $g_i\in \Gal(F_i/F_1\cap F_2)$ of order $\ell$. Let $H$ be the preimage in $G$ of the subgroup $\langle (g_1,g_2)\rangle$ of the fibre product above. (Note that $(g_1,g_2)$ is indeed an element of the fibre product.) Then $H$ has order $\ell$ and $H\cap G_i$ is trivial. To see this, let $g\in G$ be the element mapping to $(g_1,g_2)$  (thus $H=\langle g\rangle$) and observe that if $g\in G_i=\Gal(F_1F_2/F_i)$ then $g|_{F_i}=1=g_i$, a contradiction. It follows from Lemma \ref{group-ent} that $F_1$ and $F_2$ are entangleable by $L=F^H$.
\end{proof}

Notice that in Examples \ref{groupquad} and \ref{cyclotomic}, entanglement was always achieved via a subfield of $F_1F_2$. This turns out to be true in general, as we now show. 

\begin{theorem}\label{ent-comp} Suppose that $F_1$ and $F_2$ are entangleable by a field $L/K$. If either

\begin{enumerate}

\item[(i)] $F_1$ and $F_2$ are both Galois over $K$, or 

\item[(ii)]  $L$ is Galois over $K$,

\end{enumerate}

\noindent then $F_1$ and $F_2$ are entangleable by
$L_0:=L\cap F_1F_2$ and 
$$
\Gal(L_0F_i/L_0)\cong\Gal(LF_i/L)
$$
for $i=1,2$.
\end{theorem}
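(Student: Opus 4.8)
The plan is to show that the part of $L$ lying above $L_0=L\cap F_1F_2$ contributes nothing to the entanglement, by proving that $L$ and the compositum $F_1F_2$ are \emph{linearly disjoint} over $L_0$. Granting this, the entanglement detected by $L$ already lives inside $F_1F_2$ and is visible from $L_0$. First I would establish the linear disjointness. By definition $L\cap F_1F_2=L_0$. In case (i) the field $F_1F_2$ is Galois over $K$, being a compositum of Galois extensions, hence Galois over the intermediate field $L_0$; in case (ii) the field $L$ is Galois over $K$, hence over $L_0$. In either situation one of $L/L_0$, $F_1F_2/L_0$ is Galois while $L\cap F_1F_2=L_0$, and the standard criterion (a Galois extension is linearly disjoint from any extension meeting it only in the base) shows that $L$ and $F_1F_2$ are linearly disjoint over $L_0$. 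A convenient consequence I would record is that for every intermediate field $L_0\subseteq E\subseteq F_1F_2$ one has $LE\cap F_1F_2=E$ and $[LE:L]=[E:L_0]$.

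Next I would deduce entanglement by $L_0$. Since $L_0\subseteq L$ we have $L\,(L_0F_i)=LF_i$, so applying the displayed consequence with $E=L_0F_i$ gives $LF_i\cap F_1F_2=L_0F_i$. Intersecting the hypothesis $LF_1=LF_2$ with $F_1F_2$ then yields $L_0F_1=L_0F_2$. For nontriviality, note that $L_0F_i=L_0$ would force $F_i\subseteq L_0\subseteq L$ and hence $LF_i=L$, contradicting $LF_1=LF_2\supsetneq L$; thus $L_0F_i\supsetneq L_0$ and $F_1,F_2$ are entangleable by $L_0$.

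For the equality of entanglement types I would pass to the group-theoretic picture of Lemma \ref{group-ent}. Fix a finite Galois extension $N/K$ containing $LF_1F_2$ and set $G=\Gal(N/K)$, $H=\Gal(N/L)$, $G_i=\Gal(N/F_i)$, and $M=G_1\cap G_2=\Gal(N/F_1F_2)$, so that $H_0:=\Gal(N/L_0)=\langle H,M\rangle$; under either hypothesis one of $H$, $M$ is normal, whence $H_0=HM=MH$. Writing $J=H\cap G_1=H\cap G_2$ (the equality and the inclusion $J\subsetneq H$ being the group form of entanglement by $L$ supplied by Lemma \ref{group-ent}), Dedekind's modular law applied with $M\leq G_i$ gives $H_0\cap G_i=MH\cap G_i=M(H\cap G_i)=MJ$, so the two intersections again coincide. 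Moreover $M\cap H=M\cap(H\cap G_i)=M\cap J$ because $M\subseteq G_i$, and a short computation then shows that the composite $H\hookrightarrow H_0\twoheadrightarrow H_0/MJ$ is surjective with kernel exactly $H\cap MJ=J$. This produces $\Gal(L_0F_i/L_0)\cong H_0/(H_0\cap G_i)\cong H/J\cong\Gal(LF_i/L)$, which is the asserted isomorphism.

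The step I expect to be the main obstacle is the last one: upgrading the equality of degrees $[L_0F_i:L_0]=[LF_i:L]$, which is immediate from linear disjointness, to a genuine isomorphism of Galois groups, and in particular ensuring that $\Gal(L_0F_i/L_0)$ is a well-defined group, i.e. that $H_0\cap G_i\trianglelefteq H_0$, uniformly across the two hypotheses. This is precisely where the Galois assumptions (i) and (ii) enter, through Lemma \ref{group-ent}; the key algebraic inputs are the two modular-law identities $H_0\cap G_i=MJ$ and $M\cap H=M\cap J$, which together reduce the isomorphism to the elementary kernel-and-image computation above.
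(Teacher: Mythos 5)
Your proposal is correct and, interestingly, it runs the paper's proof in reverse. The paper gets entangleability by $L_0$ purely group-theoretically: with $F$ the Galois closure of $LF_1F_2$, $G=\Gal(F/K)$, $H=\Gal(F/L)$, $G_i=\Gal(F/F_i)$, it applies a standalone lemma (Proposition \ref{groupcomp}) saying that if $H$ or $G_1\cap G_2$ is normal, then $H\cap G_1=H\cap G_2\subsetneq H$ forces $\langle H,G_1\cap G_2\rangle\cap G_1=\langle H,G_1\cap G_2\rangle\cap G_2\subsetneq\langle H,G_1\cap G_2\rangle$; the proof of that lemma is exactly the manipulation hiding inside your Dedekind identity $MH\cap G_i=M(H\cap G_i)$. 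The isomorphism of entanglement types is then read off a field diagram via natural irrationalities: $\Gal(L_0F_i/L_0)\cong\Gal(F_i/L_0\cap F_i)$, together with $L_0\cap F_i=L\cap F_i$ and $\Gal(F_i/L\cap F_i)\cong\Gal(LF_i/L)$. You do the two halves the other way around: entangleability by $L_0$ comes from field theory (linear disjointness of $L$ and $F_1F_2$ over $L_0$, giving $LF_i\cap F_1F_2=L_0F_i$, hence $L_0F_1=L_0F_2\supsetneq L_0$), and the isomorphism comes from group theory (the coset computation $H/J\leftrightarrow H_0/MJ$). Both routes are sound; your linear-disjointness step is more transparent than Proposition \ref{groupcomp} and hands you the degree equality $[L_0F_i:L_0]=[LF_i:L]$ with no group theory at all, while in case (i) the paper's diagram reaches the group isomorphism faster than your kernel-and-image computation.

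The one place where you overclaim is the normality issue you yourself single out. In case (i), $G_i\trianglelefteq G$ gives $J=H\cap G_i\trianglelefteq H$ and $MJ=H_0\cap G_i\trianglelefteq H_0$, and your argument is complete. But in case (ii) the assertion that the hypotheses ensure $H_0\cap G_i\trianglelefteq H_0$ (equivalently, that $\Gal(L_0F_i/L_0)$ and $\Gal(LF_i/L)$ are honest Galois groups) is false in general: take $G=S_4$, $H=A_4$ (so $L/K$ is Galois) and $G_1=G_2=\langle(1\,2\,3\,4)\rangle$; then $J=H\cap G_1=\langle(1\,3)(2\,4)\rangle$ is not normal in $A_4$, so $LF_1/L$ is not Galois even though $L/K$ is. In that situation what your argument genuinely proves is the bijection of coset spaces, i.e.\ $[L_0F_i:L_0]=[LF_i:L]$. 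To be fair, this defect is inherited from the paper: Lemma \ref{group-ent} makes the same unjustified claim when $L/K$ is Galois, the statement of Theorem \ref{ent-comp} itself presupposes these extensions are Galois, and the paper's own dashed-line diagram is equally unsupported in case (ii). So your proof matches the paper's standard of rigor in case (i) and shares its gap in case (ii).
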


\begin{proof} Write $F$ for  the Galois closure of $LF_1F_2$. Let $G=\Gal(F/K)$, $G_1= \Gal(F/F_1)$,  $G_2= \Gal(F/F_2)$, and $H=\Gal(F/L)$.
Under the Galois correspondence, we have
\begin{align*}
LF_i&\longleftrightarrow H\cap G_i\\
L_0 &\longleftrightarrow \langle H,G_1\cap G_2\rangle\\
L_0F_i&\longleftrightarrow \langle H,G_1\cap G_2\rangle\cap G_i.
\end{align*} 
Since $F_1$ and $F_2$ are entangleable by $L$, we know that $H\cap G_1=H\cap G_2\subsetneq H$. If $F_1$ and $F_2$ are Galois over $K$, the subgroup $G_1\cap G_2$ (corresponding to $F_1F_2$) is normal in $G$. If $L$ is Galois over $K$, then $H$ is normal in $G$. The fact that $F_1$ and $F_2$ are entangleable by $L_0$ now follows from Proposition \ref{groupcomp} below.

The final statement follows from the diagram below, where the dashed lines indicate extensions with isomorphic Galois groups.
\begin{center}    
\begin{tikzpicture}[node distance = 1.4cm]
\node (1) {$L_0F_i$};
\node (2)[below of =1, left of =1]  {$L_0$};
\node (3)[below of =1, right of =1]  {$F_i$};
\node (4) [below of=3] {$L_0\cap F_i=L\cap F_i$};
\node (5) [xshift=1.2cm][right of=1] {$LF_i$};
\node (6) [below of=5, right of=5] {$L$};
\draw[dashed] (1) to node {} (2);
\draw[-] (1) to node {} (3);
\draw[dashed] (3) to node {} (4);
\draw[-] (2) to node {} (4);
\draw[-] (5) to node {} (3);
\draw[dashed] (5) to node {} (6);
\draw[-] (6) to node {} (4);
\end{tikzpicture}
\end{center}
\end{proof}

\begin{proposition}\label{groupcomp} Let $G$ be a group with subgroups $G_1,G_2$, and $H$. Suppose that either $H$ or $G_1\cap G_2$ is normal in $G$. Then $H\cap G_1=H\cap G_2\subsetneq H$ implies 
$$
\langle H,G_1\cap G_2\rangle\cap G_1=\langle H,G_1\cap G_2\rangle\cap G_2\subsetneq \langle H,G_1\cap G_2\rangle. 
$$
\end{proposition}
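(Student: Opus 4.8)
The plan is to reduce the entire statement to Dedekind's modular law. Write $N = G_1 \cap G_2$ and $M = \langle H, N\rangle$; the goal is then to show $M \cap G_1 = M \cap G_2 \subsetneq M$. The first step I would take is to use the normality hypothesis to identify $M$ concretely. If either $H$ or $N$ is normal in $G$, then the set product $HN$ is a subgroup, and hence $M = \langle H, N \rangle = HN = NH$. This is the only place the normality assumption enters, and it is exactly what is needed to make the product tractable: without it, $\langle H, N\rangle$ could be strictly larger than $HN$ and the argument below would break.

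Next, since $N = G_1 \cap G_2 \le G_1$ and $N \le G_2$, I would invoke the modular law in the form: for subgroups $A \le C$ and any subgroup $B$, one has $AB \cap C = A(B \cap C)$. Applying this with $A = N$, $C = G_i$, and $B = H$, and using the identification $M = NH$, I obtain
\[
M \cap G_i = NH \cap G_i = N(H \cap G_i), \qquad i = 1,2.
\]
Because the hypothesis supplies $H \cap G_1 = H \cap G_2$, the two right-hand sides coincide, which gives $M \cap G_1 = M \cap G_2$ at once.

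It then remains to verify the strict inclusion $M \cap G_1 \subsetneq M$. Here I would argue by contradiction: if $M \cap G_1 = M$, then $M \subseteq G_1$, so in particular $H \subseteq G_1$, forcing $H \cap G_1 = H$. This contradicts the standing hypothesis $H \cap G_1 \subsetneq H$, so the inclusion must be proper.

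I expect the only genuinely delicate point to be the justification that $M = HN$ as a subgroup; once that identification is established, the modular law does all the remaining work uniformly, and no real case split between (i) $H \triangleleft G$ and (ii) $N \triangleleft G$ is needed beyond confirming that the product is a subgroup. The one thing I would be careful to check is that the modular law is applied in the correct direction — with the contained subgroup $N$ sitting inside $G_i$ — since reversing the roles would fail to yield the clean factorization $N(H \cap G_i)$ that makes the two intersections visibly equal.
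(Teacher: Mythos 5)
Your proof is correct and takes essentially the same route as the paper's: both arguments use the normality hypothesis solely to identify $\langle H, G_1\cap G_2\rangle$ with the product $H(G_1\cap G_2)$, and your invocation of Dedekind's modular law is precisely the packaged form of the paper's hand computation (write $x=hg$ with $h\in H$, $g\in G_1\cap G_2$, note $h=xg^{-1}\in H\cap G_1=H\cap G_2$, conclude $x\in G_2$). The strictness argument is likewise the same, so nothing further is needed.
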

\begin{proof} Since $H$ or  $G_1\cap G_2$ is normal in $G$, we know that $H(G_1\cap G_2)$ is a subgroup of $G$, hence $\langle H,G_1\cap G_2\rangle =H(G_1\cap G_2)$. The strict inclusion in the statement is now obvious. It therefore suffices to prove $\langle H,G_1\cap G_2\rangle\cap G_1\subseteq \langle H,G_1\cap G_2\rangle\cap G_2$, since the other inclusion will then follow by symmetry. Let $x\in H(G_1\cap G_2)\cap G_1$. We must show $x\in G_2$. Since $x\in H(G_1\cap G_2)$ we can write $x=hg$ for some $h \in H$ and $g\in G_1\cap G_2$. Then $h=xg^{-1}\in G_1$ since both $x\in G_1$ and $g^{-1}\in G_1$. Thus $h\in H\cap G_1=H\cap G_2$, so in particular $h\in G_2$. It now follows that $x=hg\in G_2$ since $h\in G_2$ and $g\in G_2$. \end{proof}

Theorem \ref{ent-comp} immediately implies the following.

\begin{corollary}\label{coro-classifyent} Suppose that $F_1$ and $F_2$ are Galois over $K$. Then $F_1$ and $F_2$ are entangleable if and only if $F_1$ and $F_2$ are entangleable by a subfield of $F_1F_2$. \end{corollary}

If $F_1$ and $F_2$ are entangleable by some $L$, the following proposition shows that one can often use $L$ to construct other fields which also entangle $F_1$ and $F_2$.

\begin{proposition}\label{compent} Suppose that $F_1$ and $F_2$ are entangleable by $L/K$. If $L'/K$ satisfies $LL'\cap F_i=L\cap F_i$ for either $i=1$ or $2$, then $F_1$ and $F_2$ are entangleable by $LL'$ and 
$$
[LL'F_i:LL']=[LF_i:L].
$$
\end{proposition}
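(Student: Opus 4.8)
The plan is to split the claim into two essentially independent pieces: first the equality of composita $LL'F_1 = LL'F_2$, which (once we know the common degree over $LL'$ exceeds $1$) is exactly entangleability by $LL'$; and second the precise degree identity $[LL'F_i : LL'] = [LF_i : L]$. The first piece I would dispatch immediately, since $F_1$ and $F_2$ being entangleable by $L$ means $LF_1 = LF_2$, and composing with $L'$ gives
$$
LL'F_1 = L'(LF_1) = L'(LF_2) = LL'F_2.
$$
So everything reduces to computing $[LL'F_i : LL']$ and checking it is larger than $1$; both will drop out of the degree identity.

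For the degree identity I would invoke the standard formula for the degree of a compositum against a Galois extension: if $F_i/K$ is Galois then for every intermediate field $N$ with $K\subseteq N$ one has $[NF_i : N] = [F_i : N \cap F_i]$. Applying this with $N = L$ and with $N = LL'$ yields
$$
[LF_i : L] = [F_i : L \cap F_i], \qquad [LL'F_i : LL'] = [F_i : LL' \cap F_i].
$$
The hypothesis $LL' \cap F_i = L \cap F_i$ makes the two right-hand sides identical, whence $[LL'F_i : LL'] = [LF_i : L]$. Since entangleability by $L$ forces $[LF_i : L] > 1$, the same holds over $LL'$, so $LL'F_i \supsetneq LL'$ and $F_1,F_2$ are genuinely entangleable by $LL'$. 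Note it suffices to assume the intersection hypothesis for a single index $i$: because $LF_1 = LF_2$ and $LL'F_1 = LL'F_2$, the two degree identities for $i=1,2$ coincide, so proving one proves both.

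The step to watch — and the only real content of the proposition — is the compositum degree formula $[NF_i : N] = [F_i : N \cap F_i]$, where normality of $F_i$ over $K$ is essential. Without it the degree can genuinely fall: taking $F_1=F_2$ to be a non-normal cubic field (self-entangleable by $L=K$) and $L'$ a conjugate cubic meeting $F_1$ only in $K$ gives $LL'\cap F_i = K = L\cap F_i$ yet $[LL'F_i:LL']=2<3=[LF_i:L]$. In the setting of interest the $F_i$ are Galois over $K$ (as for the torsion fields $K(E[p])$), so the formula applies, and I would state this normality hypothesis explicitly. One half does survive unconditionally: viewing $LL'F_i$ as the compositum of $LL'$ and $LF_i$, both of which contain $L$, the elementary bound gives $[LL'F_i:LL'] \le [LF_i:L]$, so the substance of the hypothesis $LL'\cap F_i = L\cap F_i$ lies entirely in the reverse inequality.
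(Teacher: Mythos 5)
Your core argument is the same as the paper's: the paper proves the compositum equality $(LL')F_1=L'(LF_1)=L'(LF_2)=(LL')F_2$ and then appeals to a field diagram in which the extensions $LL'F_1/LL'$, $F_1/(F_1\cap L)$, and $LF_1/L$ are marked as having equal degree --- which is exactly the pair of applications of the formula $[NF_i:N]=[F_i:N\cap F_i]$ (with $N=LL'$ and $N=L$) that you spell out, combined with the hypothesis $LL'\cap F_i=L\cap F_i$.

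The substantive difference is your normality caveat, and it is a genuine catch rather than pedantry. The proposition as printed carries no Galois hypothesis on $F_1,F_2$, and your conjugate-cubic example ($F_1=F_2$ a non-normal cubic over $K=L=\Q$, $L'$ a conjugate cubic, so $LL'\cap F_1=\Q=L\cap F_1$ while $[LL'F_1:LL']=2<3=[LF_1:L]$) shows the degree identity genuinely fails in that generality; the dashed equalities in the paper's diagram are precisely where normality is used silently. The omission is harmless downstream: in the paper's only application of this proposition (Theorem \ref{infmanyfields}), one takes $F_1=K(E[p])$ or $F_1=F_2=K_{p,q}$, both Galois over $K$, so your corrected statement is exactly what is needed there. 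One refinement to your write-up: the entanglement conclusion itself requires no normality at all. You always have $LL'F_1=LL'F_2$, and if this field equaled $LL'$ then $F_1\subseteq LL'$ would give $F_1=LL'\cap F_1=L\cap F_1\subseteq L$, contradicting $LF_1\supsetneq L$; so only the degree identity needs the Galois hypothesis, which is consistent with your counterexample, where the fields remain entangleable by $LL'$ even as the degree drops.
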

\begin{proof} The result follows from the fact
$$
(LL')F_1=L'(LF_1)=L'(LF_2)=(LL')F_2
$$
and the diagram below, where dashed lines indicate extensions of the same degree. 
\begin{center}    
\begin{tikzpicture}[node distance = 1.4cm]
\node (1) {$(LL')F_1$};
\node (2)[below of =1, left of =1]  {$LL'$};
\node (3)[below of =1, right of =1]  {$F_1$};
\node (4) [below of=3] {$LL'\cap F_1=F_1\cap L$};
\node (5) [xshift=1.2cm][right of=1] {$LF_1$};
\node (6) [below of=5, right of=5] {$L$};
\draw[dashed] (1) to node {} (2);
\draw[-] (1) to node {} (3);
\draw[dashed] (3) to node {} (4);
\draw[-] (2) to node {} (4);
\draw[-] (5) to node {} (3);
\draw[dashed] (5) to node {} (6);
\draw[-] (6) to node {} (4);
\end{tikzpicture}
\end{center}
\end{proof}

\begin{lemma}\label{inffield} There exist infinitely many fields $L'/K$ such that $LL'\cap F_1=L\cap F_1$.
\end{lemma}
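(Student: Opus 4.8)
The plan is to reduce the statement to a numerical condition on field degrees and then produce infinitely many auxiliary fields by a coprimality argument. First I would replace $F_1$ by its Galois closure $\hat F_1$ over $K$, a finite Galois extension since $F_1/K$ is finite and separable. It suffices to find infinitely many $L'/K$ with $LL'\cap \hat F_1 = L\cap \hat F_1$: intersecting both sides with $F_1\subseteq \hat F_1$ yields $LL'\cap F_1 = (LL'\cap\hat F_1)\cap F_1 = (L\cap\hat F_1)\cap F_1 = L\cap F_1$, which is exactly what we want. Set $n=[\hat F_1 : L\cap \hat F_1]$, a fixed finite integer.

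The key observation is that the intersection collapses to the minimal possibility as soon as $[LL':L]$ is coprime to $n$. Writing $M=LL'\cap \hat F_1$, we have $L\cap \hat F_1 \subseteq M \subseteq \hat F_1$. Since $\hat F_1/K$ is Galois, so is $\hat F_1/(L\cap\hat F_1)$, and hence so is the subextension $M/(L\cap \hat F_1)$; consequently $[LM:L] = [M:L\cap\hat F_1]$, which divides $n$. On the other hand the tower $L\subseteq LM\subseteq LL'$ shows that $[LM:L]$ divides $[LL':L]$. If $\gcd([LL':L],n)=1$ then $[LM:L]=1$, so $M\subseteq L$; combined with $M\subseteq \hat F_1$ this forces $M = L\cap \hat F_1$, as required.

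It then remains to exhibit infinitely many finite extensions $L'/K$ with $[LL':L]$ coprime to $n$, and for this I would take $L'/K$ cyclic of prime degree $\ell > n$. For such $L'$ the compositum $LL'/L$ is again Galois with $\Gal(LL'/L)\cong \Gal(L'/L\cap L')$, a subgroup of $\Z/\ell\Z$, so $[LL':L]\in\{1,\ell\}$ and is automatically coprime to $n$; the previous paragraph then gives $LL'\cap \hat F_1 = L\cap \hat F_1$. Since there are infinitely many primes $\ell>n$, and (when $K$ is a number field) for each such $\ell$ the field $K$ admits cyclic extensions of degree $\ell$ — for instance the degree-$\ell$ subfields of $K(\zeta_r)$ for primes $r\equiv 1 \pmod \ell$, of which there are infinitely many by Dirichlet — we obtain infinitely many distinct such $L'$, which by Proposition \ref{compent} yields infinitely many fields $LL'$ entangling $F_1$ and $F_2$ with unchanged entanglement degree.

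The main obstacle is the number-theoretic input in the last step: ensuring infinitely many Galois extensions of $K$ whose degree is coprime to $n$, and that these remain of the expected degree over $K$ and are pairwise distinct (only finitely many can collapse when $[K:\Q]<\infty$), so that the resulting family is genuinely infinite. The reduction to the Galois closure is what makes the degree bookkeeping exact, since the identity $[LM:L]=[M:L\cap \hat F_1]$ can fail when $M/(L\cap\hat F_1)$ is not Galois; passing to $\hat F_1$ and only afterward intersecting back down to $F_1$ circumvents this difficulty cleanly.
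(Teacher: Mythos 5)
Your route is genuinely different from the paper's, and its skeleton (force $[LL':L]$ to be coprime to $n=[\hat F_1:L\cap\hat F_1]$, so that the intersection cannot grow) is sound; but as written it contains one false step, and it is precisely the step your degree bookkeeping rests on. You assert that since $\hat F_1/(L\cap\hat F_1)$ is Galois, ``so is the subextension $M/(L\cap\hat F_1)$.'' That principle is false: an intermediate field of a Galois extension need not be Galois over the base, the standard example being $\Q\subseteq\Q(\sqrt[3]{2})\subseteq\Q(\sqrt[3]{2},\zeta_3)$. Your closing paragraph shows this is not a slip of the pen: you say that passing to the Galois closure is what makes the identity $[LM:L]=[M:L\cap\hat F_1]$ legitimate, but taking the closure only makes the \emph{top} field $\hat F_1$ Galois over $L\cap\hat F_1$; it does nothing to make the intermediate field $M=LL'\cap\hat F_1$ Galois over $L\cap\hat F_1$, which is where you apply the claim.

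The good news is that what you actually need is true and has a one-line proof, so the argument is repairable. Since $\hat F_1/K$ is Galois, $L\hat F_1/L$ is Galois with $\Gal(L\hat F_1/L)\cong\Gal(\hat F_1/L\cap\hat F_1)$, so $[L\hat F_1:L]=n$; as $L\subseteq LM\subseteq L\hat F_1$, the tower law gives $[LM:L]\mid n$, which is all your coprimality step uses. (In fact the exact identity $[LM:L]=[M:L\cap\hat F_1]$ does hold here, but for a different reason: the map $M\mapsto LM$ is a degree-preserving bijection between intermediate fields of $\hat F_1/(L\cap\hat F_1)$ and of $L\hat F_1/L$, induced by the restriction isomorphism of Galois groups --- the hypothesis needed is that $\hat F_1$, not $M$, is Galois over $L\cap\hat F_1$.) With that repair, the remainder of your argument (cyclic extensions of prime degree $\ell>n$, infinitely many of them via cyclotomic subfields, pairwise distinct by ramification) is correct when $K$ is a number field, which is also what the paper's own proof implicitly assumes. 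By contrast, the paper argues much more cheaply: it takes $L'=L(\sqrt{D})$ for squarefree $D\in\Oo_K$ avoiding the finitely many square classes appearing in $F_1$ and $L$, and verifies $LL'\cap F_1=L\cap F_1$ by writing an element of $L(\sqrt D)\cap F_1$ as $\ell_1+\ell_2\sqrt D$. Your coprime-degree mechanism is heavier, but it is insensitive to hidden interactions inside the compositum $LF_1$, which a direct verification of that kind must treat with some care.
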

\begin{proof} We prove there are infinitely many quadratic extensions $L'/K$ satisfying the claim. Since $L/K$ and $F_1/K$ have finite degree, the sets
$$
S_1 = \{D\in \Oo_K\mid K\subsetneq K(\sqrt{D})\subseteq F_1\}\quad \text{and}\quad S_2 = \{D\in \Oo_K\mid K\subsetneq K(\sqrt{D})\subseteq L\}
$$
must be finite. Let $D$ be any squarefree element of $\Oo_K$ that is not in $S_1\cup S_2$ and set $L'=L(\sqrt{D})$. We claim that $LL'\cap F_1=L\cap F_1$. One inclusion is obvious. For the other, let $x\in L(\sqrt{D})\cap F_1$. Since $x\in L(\sqrt{D})$ we can write $x=\ell_1+\sqrt{D}\ell_2$ for some $\ell_i\in L$. Since $x\in F_1$ and $F_1$ does not contain $\sqrt{D}$, we must have $\ell_2=0$, whence $x=\ell_1\in L\cap F_1$. 
\end{proof}

Proposition \ref{compent} and Lemma \ref{inffield} immediately imply the following corollary. 

\begin{corollary} If $F_1/K$ and $F_2/K$ are entangleable by some $L/K$ then they are entangleable by infinitely many $L/K$. 
\end{corollary}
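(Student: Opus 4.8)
The plan is to feed the infinite supply of auxiliary fields furnished by Lemma~\ref{inffield} into the construction of Proposition~\ref{compent}. Since $F_1$ and $F_2$ are entangleable by $L$, Lemma~\ref{inffield} produces infinitely many distinct fields $L'/K$ satisfying $LL'\cap F_1 = L\cap F_1$. This intersection condition is exactly the hypothesis of Proposition~\ref{compent} taken with $i=1$, so for each such $L'$ that proposition guarantees that $F_1$ and $F_2$ are entangleable by the compositum $LL'$, and moreover that $[LL'F_i:LL'] = [LF_i:L]$ for $i=1,2$.

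The first point I would verify is that each of these entanglements is genuinely nontrivial. By the definition of entangleability, the hypothesis that $L$ entangles $F_1$ and $F_2$ means $[LF_i:L]>1$; the degree identity supplied by Proposition~\ref{compent} then forces $LL'F_i\supsetneq LL'$. Hence every $LL'$ is a legitimate entangling field and not merely a field over which the compositum degenerates.

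The step requiring the most care is ensuring that the entangling fields produced are pairwise distinct, so that the infinitude of $\{L'\}$ is not lost under the passage $L'\mapsto LL'$. I would resolve this by appealing to the explicit construction in the proof of Lemma~\ref{inffield}, where each $L'$ is a quadratic extension $L(\sqrt{D})$ of $L$; in particular $L\subseteq L'$, so that $LL'=L'$. Since Lemma~\ref{inffield} furnishes infinitely many \emph{distinct} such $L'$, the compositum map cannot collapse them to a finite set, and we obtain infinitely many distinct fields, each entangling $F_1$ and $F_2$. This completes the argument. The only genuine subtlety, beyond transcribing the two cited results, is this verification that distinctness of the $L'$ survives the passage to $LL'$, which the inclusion $L\subseteq L'$ settles at once.
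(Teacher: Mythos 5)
Your proof is correct and takes essentially the same route as the paper, which derives the corollary directly from Proposition~\ref{compent} and Lemma~\ref{inffield} exactly as you do. Your extra verification that distinctness survives the passage $L'\mapsto LL'$ (via the observation that in the lemma's construction $L\subseteq L'$, so $LL'=L'$) fills in a detail the paper leaves implicit, and you handle it correctly.
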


 \section{Applications of Field Entanglement to Elliptic Curves}\label{ellcurveent}

One can use the idea of field entanglements to study the size of the $(p,q)$-entanglement of $E$, as illustrated in the following proposition. 

\begin{proposition}\label{entprop1} Let $F_1$ and $F_2$ be (not necessarily distinct) subfields of $K(E[p])$ and $K(E[q])$, respectively. Suppose that $F_1$ and $F_2$ are entangleable by some $L/K$. Then, 
\begin{enumerate}
\item $[LF_1:L]$ divides $\#T_{p,q}(E/L)$, and

\item if $F_1=K(E[p])$ then
$$
T_{p,q}(E/L)\cong\Gal(LF_1/L)\cong\Gal(K(E[p])/L\cap K(E[p]))\cong\Gal(F_2/L\cap F_2).
$$
\end{enumerate} 
\end{proposition}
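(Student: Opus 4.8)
The plan is to build everything on the single fact that adjoining torsion commutes with base extension: for $n\in\{p,q\}$ and any intermediate field $L$ one has $L(E[n])=L\cdot K(E[n])$, since $E[n]$ is the same set of points over $L$ as over $K$ (this is the identity already used in \S2). Because $F_1\subseteq K(E[p])$ and $F_2\subseteq K(E[q])$, this immediately traps the composita: $LF_1\subseteq L(E[p])$ and $LF_2\subseteq L(E[q])$. Everything then follows by locating $LF_1=LF_2$ inside $L_{p,q}=L(E[p])\cap L(E[q])$.

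For part (1), the entangleability hypothesis gives $LF_1=LF_2$, so this common field lies in $L(E[p])\cap L(E[q])=L_{p,q}$. We thus obtain a tower $L\subseteq LF_1\subseteq L_{p,q}$ of finite extensions (finiteness of $[L_{p,q}:L]=\#T_{p,q}(E/L)$ is Proposition \ref{sizebound}), and multiplicativity of degrees yields $[LF_1:L]\,\big|\,[L_{p,q}:L]=\#T_{p,q}(E/L)$.

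For part (2), taking $F_1=K(E[p])$ upgrades the first containment to the equality $LF_1=L(E[p])$. Chaining with (1) gives $L(E[p])=LF_1\subseteq L_{p,q}\subseteq L(E[p])$, forcing $L_{p,q}=LF_1=L(E[p])$ (and in particular $L(E[p])\subseteq L(E[q])$). Hence $T_{p,q}(E/L)=\Gal(L_{p,q}/L)=\Gal(LF_1/L)$, which is the first isomorphism. For the second, $F_1=K(E[p])$ is Galois over $K$, so $LF_1/L$ is Galois and the restriction map $\sigma\mapsto\sigma|_{F_1}$ is the standard isomorphism $\Gal(LF_1/L)\cong\Gal(F_1/(F_1\cap L))=\Gal(K(E[p])/(L\cap K(E[p])))$.

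The final isomorphism $\Gal(LF_1/L)\cong\Gal(F_2/(L\cap F_2))$ is the step I expect to be the real obstacle, and it is where a Galois hypothesis on $F_2$ becomes essential. We do know $LF_2=LF_1=L(E[p])$ is Galois over $L$, but Galois-ness of $LF_2/L$ need not descend to $F_2/(F_2\cap L)$, even in the presence of linear disjointness: with $L=\Q(\zeta_3)$ and $F_2=\Q(\sqrt[3]{2})$ one has $LF_2/L$ cyclic of degree $3$ and $[LF_2:L]=[F_2:F_2\cap L]$, yet $F_2/(F_2\cap L)=\Q(\sqrt[3]{2})/\Q$ is not Galois. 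This same phenomenon is realized in the elliptic-curve setting of Example \ref{ex-2in3}: taking $F_2$ a (non-normal) cubic subfield of $\Q(E[2])$ and $L=\Q(\sqrt{\Delta_E})$ gives $\Gal(LF_1/L)\cong\Z/3\Z$ while $\Aut(F_2/(L\cap F_2))$ is trivial. The clean route is therefore to work under the natural additional input that $F_2/K$ (equivalently $F_2/(F_2\cap L)$) is Galois, which holds in the intended applications where $F_2$ is taken cyclic; the identical restriction argument used for $F_1$ then gives $\Gal(LF_2/L)\cong\Gal(F_2/(F_2\cap L))$ and closes the chain.
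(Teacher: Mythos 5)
Your proofs of part (1) and of the first two isomorphisms in part (2) coincide with the paper's: the containments $LF_1\subseteq L\cdot K(E[p])=L(E[p])$ and $LF_2\subseteq L(E[q])$ together with the hypothesis $LF_1=LF_2$ place this common field inside $L_{p,q}$, giving the divisibility; when $F_1=K(E[p])$ the two inclusions $LF_1\subseteq L_{p,q}\subseteq L(E[p])=LF_1$ force $L_{p,q}=LF_1$; and restriction identifies $\Gal(LF_1/L)$ with $\Gal(K(E[p])/L\cap K(E[p]))$ because $K(E[p])/K$ is Galois.

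The obstacle you flag at the last isomorphism is real, and it is a gap in the paper's own proof rather than in yours. The paper concludes by citing the ``fact'' that $\Gal(LF_i/L)\cong\Gal(F_i/L\cap F_i)$ for both $i=1,2$; this is the restriction isomorphism for the compositum of a Galois extension with an arbitrary one, and it requires $F_i$ to be Galois over $F_i\cap L$. Nothing in the hypotheses imposes this on $F_2$, and your instantiation inside Example \ref{ex-2in3} refutes the statement as written: take $p=2$, $q=3$, $F_1=\Q(E[2])$, $F_2$ one of the non-normal cubic subfields $L_i\subseteq \Q(E[2])\subseteq \Q(E[3])$, and $L=\Q(\sqrt{\Delta_E})$. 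Then $LF_1=LF_2=\Q(E[2])\supsetneq L$, so the pair is entangleable by $L$ and $F_1=K(E[p])$, yet $\Gal(LF_1/L)\cong \Z/3\Z$ while $L\cap F_2=\Q$ and $F_2/\Q$ is not normal (indeed $\Aut(F_2/\Q)=1$), so the restriction of an element of $\Gal(LF_2/L)$ need not even stabilize $F_2$. The final isomorphism in (2) must therefore either be dropped or be accompanied by the hypothesis that $F_2$ is Galois over $K$ (equivalently, over $F_2\cap L$), exactly as you suggest; with that hypothesis your restriction argument closes the chain.

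One correction to your closing remark: in the paper's applications of this proposition (Theorem \ref{main1}, Theorem \ref{infmanyfields}, Example \ref{S3S3ex}, Theorem \ref{2qclassification}), the field $F_2$ is taken to be $K(E[q])$, the Galois closure of a cubic subfield, or $K_{p,q}$ --- always Galois over the base, but not generally cyclic. The hypothesis that rescues the statement is normality of $F_2/(F_2\cap L)$, not cyclicity; and since those applications either have $F_2$ Galois or use only the conclusion $T_{p,q}(E/L)\cong\Gal(LF_1/L)$, the downstream results of the paper are unaffected by the correction.
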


\begin{proof} (1) Since $F_1\subseteq K(E[p])$ we know that $LF_1\subseteq LK(E[p])=L(E[p])$. Similarly, $LF_2\subseteq L(E[q])$. Thus, the field $LF_1=LF_2$ is contained in both $L(E[p])$ and $L(E[q])$, i.e., $LF_1\subseteq L_{p,q}$, whence $[LF_1:L]$ divides $[L_{p,q}:L]$.

(2) Suppose $F_1=K(E[p])$. Then
$$
L(E[p])=LF_1=L_{p,q}.
$$ 
The first equality is obvious, and the second follows from the inclusion $LF_1\subseteq L_{p,q}$ (shown in part (1)) together with the fact that if $x\in L_{p,q}$ then $x\in L(E[p])=LF_1$. The result now follows from the fact that $LF_1=LF_2$ and
$$
\Gal(LF_i/L)\cong\Gal(F_i/L\cap F_i).
$$
\end{proof}

Notice in particular that if $T_{p,q}(E/K)=1$ and if $K(E[p])$ and $K(E[q])$ contain a pair of subfields that are entangleable by some $L/K$, then Proposition \ref{entprop1} guarantees $T_{p,q}(E/L)\neq 1$. In other words, one can \emph{force} $(p,q)$-entanglements on $E$ by entangling various subfields of its $p$ and $q$-torsion fields. (See Example \ref{quadex} for a concrete illustration of entangling quadratic subfields).

\begin{example}\label{S3S3ex} \nf Let $E/\Q$ be the non-CM elliptic curve with Cremona label \href{https://www.lmfdb.org/EllipticCurve/Q/1323l1/}{\texttt{1323l1}}. 
Neither 2 nor 3 are exceptional for $E$, so $\Gal(\Q(E[2])/\Q)=S_3$ and $\Gal(\Q(E[3])/\Q)=\GL_2(\Z/3\Z)$. Observe that $E$ has quadratic $(2,3)$-entanglement over $\Q$ since $\Q(\sqrt{\Delta_E})=\Q(\sqrt{-3})$ and $\Q(E[2])$ is not contained in $\Q(E[3])$. Let $F_1=\Q(E[2])$ and let $F_2$ be the Galois closure of the (unique) cubic subfield of $\Q(E[3])$. Then both $F_1$ and $F_2$ are $S_3$-extensions of $\Q$ and $F_1\cap F_2=\Q(\sqrt{-3})$. By Example \ref{S3}, there are exactly five fields $L_1,\dots, L_5\subseteq F_1F_2$ which entangle $F_1$ and $F_2$, and it follows from Proposition \ref{entprop1}(2) and Example \ref{S3} that 
\begin{align*}
T_{2,3}(E/L)=\begin{cases} \Z/2\Z, &\text{if $L=\Q$ or  $L_1$}\\
1, &\text{if $L=\Q(\sqrt{-3})$}\\
\Z/3\Z, &\text{if $L=L_2$ or $L_3$}\\
S_3 &\text{if $L=L_4$ or $L_5$.}\\
\end{cases}
\end{align*}
(The same holds for any elliptic curve with $j$-invariant $1728-3t^2$, $t\in \Q^\times$, and which has maximal image mod $2$ and mod 3.) 
\end{example}

 \subsection{Cyclic entanglements}
 
\begin{theorem}\label{main1} Let $\ell$ be a prime such that $\ell\mid\gcd\big([K(E[p]):K],[K(E[q]):K]\big)$. Then there exists a field $K\subseteq L\subseteq K(E[pq])$ such that
$
T_{p,q}(E/L)= \Z/\ell\Z.
$
\end{theorem}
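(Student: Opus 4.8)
The plan is to split into two cases according to whether the prime $\ell$ already divides the order of the entanglement group over the base, i.e.\ whether $\ell \mid \#T_{p,q}(E/K)$. Throughout, write $a = [K(E[p]):K]$, $b = [K(E[q]):K]$, and $c = [K_{p,q}:K] = \#T_{p,q}(E/K)$. By hypothesis $\ell \mid \gcd(a,b)$, so $\ell \mid a$ and $\ell \mid b$; and since $K_{p,q}$ is contained in both torsion fields, $c$ divides both $a$ and $b$ (as also recorded in Proposition \ref{sizebound}), so the quotients $a/c$ and $b/c$ are integers.

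First I would treat the case $\ell \mid c$. Since $K_{p,q} = K(E[p]) \cap K(E[q])$ is an intersection of Galois extensions, it is Galois over $K$, and $\Gal(K_{p,q}/K)$ has order $c$ divisible by $\ell$. I would invoke Cauchy's theorem to produce a subgroup $M \leq \Gal(K_{p,q}/K)$ with $M \cong \Z/\ell\Z$, and then set $L = K_{p,q}^M$. As $K_{p,q}/K$ is Galois, so is $K_{p,q}/L$, with $\Gal(K_{p,q}/L) = M$. Since $K \subseteq L \subseteq K_{p,q} \subseteq K(E[pq])$, Proposition \ref{prop-subshrink} identifies $T_{p,q}(E/L) \cong \Gal(K_{p,q}/L) = M \cong \Z/\ell\Z$, which completes this case.

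Next I would treat the case $\ell \nmid c$, where the $\Z/\ell\Z$-entanglement must be \emph{created} rather than merely exposed. Because $\ell$ divides $a$ and $b$ but not $c$, the quotients $a/c$ and $b/c$ remain divisible by $\ell$. I would take $F_1 = K(E[p])$ and $F_2 = K(E[q])$, both Galois over $K$ with $F_1 \cap F_2 = K_{p,q}$, so that $[F_1 : F_1 \cap F_2] = a/c$ and $[F_2 : F_1 \cap F_2] = b/c$. Hence $d := \gcd\big([F_1:F_1\cap F_2],\,[F_2:F_1\cap F_2]\big)$ is divisible by $\ell$, in particular $d > 1$. Proposition \ref{entdeg} then furnishes a subfield $L \subseteq F_1 F_2 = K(E[pq])$ entangling $F_1$ and $F_2$ with $[LF_1 : L] = \ell$, and since $F_1 = K(E[p])$, Proposition \ref{entprop1}(2) yields $T_{p,q}(E/L) \cong \Gal(LF_1/L)$, a group of prime order $\ell$, hence isomorphic to $\Z/\ell\Z$.

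The main obstacle, and the reason the case split is forced, is the divisibility $\ell \mid \gcd(a/c,b/c)$ required to apply Proposition \ref{entdeg}: this can fail, because passing from $a,b$ to $a/c,b/c$ strips away a factor of $\ell$ exactly when $\ell \mid c$. The dichotomy is designed around this. When $\ell \mid c$ the desired $\Z/\ell\Z$ already sits inside $\Gal(K_{p,q}/K)$ and is revealed by \emph{shrinking} the base field within $K_{p,q}$; when $\ell \nmid c$ the factor of $\ell$ survives in $a/c$ and $b/c$ and the entangleable-fields machinery manufactures the entanglement. In both constructions the resulting field lies inside $K(E[pq])$, as required, so no further work is needed beyond verifying these two containments.
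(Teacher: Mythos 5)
Your proof is correct and follows essentially the same route as the paper's: the same case split on whether $\ell \mid [K_{p,q}:K]$, using Proposition \ref{prop-subshrink} to shrink within $K_{p,q}$ in the first case, and Proposition \ref{entdeg} together with Proposition \ref{entprop1}(2) applied to $F_1=K(E[p])$, $F_2=K(E[q])$ in the second. Your write-up even makes explicit two details the paper leaves implicit (Cauchy's theorem inside $\Gal(K_{p,q}/K)$, and the primality argument showing $\ell$ survives in $a/c$ and $b/c$), which is a welcome clarification rather than a deviation.
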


\begin{proof} 
First, suppose that $\ell\mid [K_{p,q}:K]$. Then there is a subfield $K\subseteq L\subseteq K_{p,q}$ such that $[K_{p,q}:L]=\ell$. It follows from Proposition \ref{prop-subshrink} that $T_{p,q}(E/L)=\Z/\ell\Z$. 

Now assume that $\ell\nmid [K_{p,q}:K]$, and set $F_1=K(E[p])$ and $F_2=K(E[q])$. Then  $\ell\mid \gcd([F_1:F_1\cap F_2],[F_2:F_1\cap F_2])$ and it follows from Proposition \ref{entdeg} that $F_1$ and $F_2$ are entangleable by a subfield $L\subseteq F_1F_2$ with $[LF_i:L]=\ell$. The result now follows from Proposition \ref{entprop1}.
 \end{proof}
 
 \begin{corollary} Suppose that 
 
 (i) $p>2$, or $p=2$ and $\Delta_E\notin K^{\times 2}$, and 
 
 (ii) $\sqrt{p^*},\sqrt{q^*}\notin K$. 

\noindent Then for any prime $q\geq 3$ there exists a field $K\subseteq L\subseteq K(E[pq])$ such that 
$$
T_{p,q}(E/L)=\Z/2\Z.
$$
\end{corollary}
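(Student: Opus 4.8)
The plan is to reduce everything to Theorem \ref{main1} applied with $\ell = 2$. That theorem produces a field $K \subseteq L \subseteq K(E[pq])$ with $T_{p,q}(E/L) = \Z/2\Z$ as soon as $2$ divides $\gcd\big([K(E[p]):K],[K(E[q]):K]\big)$. Thus the entire content of the corollary is to check that both torsion-field degrees $[K(E[p]):K]$ and $[K(E[q]):K]$ are even; hypotheses (i) and (ii) are tailored precisely to exhibit a quadratic subextension inside each torsion field.

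First I would treat the $q$-torsion field. Since $q\geq 3$ is odd, the Weil pairing gives $\Q(\zeta_q)\subseteq \Q(E[q])$, hence $K(\zeta_q)\subseteq K(E[q])$. The unique quadratic subfield of $\Q(\zeta_q)$ is $\Q(\sqrt{q^*})$, where $q^*=(-1)^{\frac{q-1}{2}}q$, so $K(\sqrt{q^*})\subseteq K(E[q])$. By hypothesis (ii) we have $\sqrt{q^*}\notin K$, so $K(\sqrt{q^*})/K$ is a genuine quadratic extension and therefore $2\mid [K(E[q]):K]$.

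Next I would treat the $p$-torsion field, splitting into the two cases of hypothesis (i). If $p>2$, the same cyclotomic argument applies verbatim: $K(\sqrt{p^*})\subseteq K(E[p])$ with $p^*=(-1)^{\frac{p-1}{2}}p$, and $\sqrt{p^*}\notin K$ forces $2\mid [K(E[p]):K]$. If instead $p=2$, I would use the fact that $K(\sqrt{\Delta_E})\subseteq K(E[2])$, the quadratic subfield cut out by the sign character $\Gal(K(E[2])/K)\hookrightarrow S_3\to \{\pm 1\}$; the condition $\Delta_E\notin K^{\times 2}$ in hypothesis (i) says precisely that this subfield is a genuine quadratic extension (equivalently, that $\im\rho_{E,2}$ is not contained in $A_3$), so again $2\mid [K(E[2]):K]$.

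Combining the two paragraphs gives $2\mid \gcd\big([K(E[p]):K],[K(E[q]):K]\big)$, and Theorem \ref{main1} with $\ell=2$ finishes the argument. The only delicate point—and the step I would single out as the main obstacle—is the $p=2$ case: unlike the odd primes, the required quadratic subfield of $K(E[2])$ is the \emph{discriminant} field $K(\sqrt{\Delta_E})$ rather than a cyclotomic field, and one must invoke the separate hypothesis $\Delta_E\notin K^{\times 2}$ to ensure it is genuinely quadratic. Everything else is routine verification that each claimed quadratic field really lies strictly above $K$.
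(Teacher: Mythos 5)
Your proposal is correct and follows essentially the same route as the paper's own proof: exhibit the quadratic subfields $K(\sqrt{p^*})$, $K(\sqrt{q^*})$ (or $K(\sqrt{\Delta_E})$ when $p=2$) inside the respective torsion fields, conclude that $2$ divides $\gcd\big([K(E[p]):K],[K(E[q]):K]\big)$, and apply Theorem \ref{main1} with $\ell=2$. Your write-up is simply a more detailed version of the paper's argument, including the same case split on $p$.
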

\begin{proof} If $p$ and $q$ are both odd then $K(\sqrt{p^*})$ and $K(\sqrt{q^*})$ are nontrivial quadratic extensions of $K$ contained in $K(\zeta_p)\subseteq K(E[p])$ and $K(\zeta_q)\subseteq K(E[q])$, respectively. In particular, $\gcd\big([K(E[p]):K],[K(E[q]):K]\big)$ is divisible by 2 and the result follows from Theorem \ref{main1}. If $p=2$ and $\Delta_E\notin K^{\times 2}$ then $K(\sqrt{\Delta_E})$ is a quadratic extension of $K$ contained in $K(E[2])$, and again we have that $2\mid \gcd\big([K(E[p]):K],[K(E[q]):K]\big)$. 
\end{proof}

\begin{theorem}\label{infmanyfields} Let $\ell$ be a prime such that $\ell\mid\gcd\big([K(E[p]):K],[K(E[q]):K]\big)$. Then the set 
$$
\{L/K \mid T_{p,q}(E/L)=\Z/\ell\Z\}
$$
is infinite. 
\end{theorem}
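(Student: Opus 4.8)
The plan is to produce a single field from Theorem \ref{main1} and then perturb it by infinitely many linearly disjoint auxiliary fields, arguing that such perturbations cannot alter an entanglement of prime order. First I would invoke Theorem \ref{main1} to fix a field $K\subseteq L_0\subseteq K(E[pq])$ with $T_{p,q}(E/L_0)=\Z/\ell\Z$; equivalently, $(L_0)_{p,q}/L_0$ is cyclic of degree $\ell$. Since $L_0\subseteq K(E[pq])$ we have $L_0(E[pq])=K(E[pq])$, so all of the torsion data relevant to $E/L_0$ lives inside the single finite Galois extension $K(E[pq])/K$, and $(L_0)_{p,q}\subseteq K(E[pq])$.

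Next I would take any finite extension $L'/K$ that is linearly disjoint from $K(E[pq])$ over $K$ --- concretely $L'=K(\sqrt{D})$ with $\sqrt{D}\notin K(E[pq])$ --- and set $M=L_0L'$. Linear disjointness of $L'$ from $K(E[pq])$ over $K$ forces $M$ to be linearly disjoint from $K(E[pq])$ over $L_0$, so the operation $M\cdot(-)$ carries the lattice of $L_0$-subfields of $K(E[pq])$ isomorphically, preserving degrees, intersections, and compositums. Because $L_0(E[p])/L_0$ is Galois it is linearly disjoint from $L_0(E[q])$ over their intersection $(L_0)_{p,q}$, and this disjointness is preserved after composing with $M$: the fields $M(E[p])$ and $M(E[q])$ are linearly disjoint over $M(L_0)_{p,q}$. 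Proposition \ref{shrinkstable}, applied with base field $L_0$ and extension $M/L_0$, then gives $T_{p,q}(E/M)\cong\Gal\big((L_0)_{p,q}/M\cap(L_0)_{p,q}\big)$. Finally, since $(L_0)_{p,q}\subseteq K(E[pq])$ and $M$ is linearly disjoint from $K(E[pq])$ over $L_0$, we have $M\cap(L_0)_{p,q}=L_0$, whence $T_{p,q}(E/M)\cong\Gal((L_0)_{p,q}/L_0)=\Z/\ell\Z$.

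To obtain infinitely many such fields I would vary $D$: only finitely many square classes of $K^{\times}$ have their square root inside the number field $K(E[pq])$, and the kernel of $K^{\times}/(K^{\times})^2\to L_0^{\times}/(L_0^{\times})^2$ is finite, so infinitely many squarefree $D$ simultaneously satisfy $\sqrt{D}\notin K(E[pq])$ and yield pairwise distinct $M=L_0(\sqrt{D})$; this is precisely the infinitude mechanism of Lemma \ref{inffield} and Proposition \ref{compent}. I expect the main obstacle to be the upper bound rather than the lower bound. The divisibility $\ell\mid\#T_{p,q}(E/M)$ follows readily (e.g.\ from Proposition \ref{entprop1} applied with $F_1=F_2=(L_0)_{p,q}$), so the genuine content is ruling out that base change by $L'$ introduces \emph{new} entanglement that would enlarge the group beyond $\Z/\ell\Z$. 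This is exactly what the linear disjointness hypothesis of Proposition \ref{shrinkstable} controls, and verifying it reduces, through Lemma \ref{lem-comp}, to the fact that $L_0(E[p])$ is Galois over $L_0$ and therefore linearly disjoint from $L_0(E[q])$ over their intersection.
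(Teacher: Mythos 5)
Your proposal is correct, and it proves the theorem by a mechanism that differs from the paper's in an interesting way. The outer strategy is the same: take one field from Theorem \ref{main1} and perturb it by infinitely many quadratic extensions avoiding finitely many bad square classes (the infinitude mechanism is exactly that of Lemma \ref{inffield}). But the engine that shows the perturbation preserves the entanglement type is different. The paper does not use Theorem \ref{main1} as a black box: it re-enters that theorem's proof to extract fields $F_1$ (Galois over $K$), $F_2$, and $L$ with $F_1,F_2$ entangleable by $L$, splitting into the cases $\ell\mid[K_{p,q}:K]$ (where $F_1=F_2=K_{p,q}$ is self-entangled) and $\ell\nmid[K_{p,q}:K]$ (where $F_1=K(E[p])$), and then transfers the entanglement to $LL'$ via Proposition \ref{compent} and Proposition \ref{entprop1}, under the weak hypothesis $LL'\cap F_1=L\cap F_1$. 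You instead impose the stronger requirement that $L'$ be linearly disjoint from the full division field $K(E[pq])$, which gives $M\cap K(E[pq])=L_0$; Galois theory then transports the entire subfield lattice of $K(E[pq])/L_0$ to that of $MK(E[pq])/M$, and Proposition \ref{shrinkstable} yields $T_{p,q}(E/M)\cong\Gal\bigl((L_0)_{p,q}/L_0\bigr)$ on the nose. What your route buys is a uniform argument (no case split) and, more importantly, transparent control of the \emph{upper} bound: you rule out that base change creates new entanglement, which is the delicate point. Indeed, the paper's appeal to Proposition \ref{entprop1}(1) for an isomorphism is loose as written --- part (1) only gives divisibility $[LF_1:L]\mid \#T_{p,q}(E/L)$, and the isomorphism of part (2) requires $F_1=K(E[p])$, which holds in only one of the paper's two cases --- so your argument is arguably tighter there. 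What the paper's route buys is a larger supply of perturbing fields (its condition admits $L'$ that meet $K(E[pq])$ nontrivially, even subfields of it), though for mere infinitude both families suffice. One shared caveat: both arguments need infinitely many independent square classes in $K^{\times}$, which is automatic for number fields but is an implicit restriction given that the paper allows arbitrary algebraic $K/\Q$; this is not a defect of your proposal relative to the paper's proof.
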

\begin{proof} First, we claim that it suffices to prove that there exist subfields $F_1,F_2$, and $L$ of $K(E[pq])$ such that $F_1/K$ is Galois, $F_1$ and $F_2$ are entangleable by $L$, and $T_{p,q}(E/L)=\Z/\ell\Z.$ Assuming that this is true, let $L'/K$ be any field for which $LL'\cap F_1=L\cap F_1$. By Proposition \ref{inffield}, there are infinitely many such $L'$. By Proposition \ref{compent}, we know that $F_1$ and $F_2$ are entangleable by $LL'$ and 
\begin{equation}\label{galeq}
[LL'F_1:LL']=[LF_1:L].
\end{equation}
Now, since $F_1$ and $F_2$ are entangleable by $L$, Proposition \ref{entprop1}.(1) implies
$$
T_{p,q}(E/L)\cong \Gal(LF_1/L),
$$
and since $F_1$ and $F_2$ are also entangleable by $LL'$, again Proposition \ref{entprop1}.(1) implies
$$
T_{p,q}(E/LL')\cong \Gal(LL'F_1/LL').
$$
The result now follows from \eqref{galeq}. 

It remains to show the existence of the fields $F_1$, $F_2$, and $L$. If $\ell\mid [K_{p,q}:K]$ then there is a subfield $K\subseteq L\subseteq K_{p,q}$ such that $[K_{p,q}:L]=\ell$, hence $T_{p,q}(E/L)=\Z/\ell\Z$. In this case, take $F_1=F_2=K_{p,q}$ and note that $F_1$ and $F_2$ are entangleable by $L$ (see Remark \ref{selfent}). If $\ell\nmid [K_{p,q}:K]$, then by (the proof of) Theorem \ref{main1} there exist subfields $L\subseteq K(E[pq])$ and $F_2\subseteq K(E[q])$ such that $F_1=K(E[p])$ and $F_2$ are entangleable by $L$ and 
$
T_{p,q}(E/L)=\Z/\ell\Z.
$
\end{proof}

\section{Classification of $(2,q)$-Entanglement Types}\label{classificationsection}

In this section we classify the possible $(p,q)$-entanglement types that can occur when $p=2$. 

\begin{theorem}\label{2qclassification} Fix an odd prime $q$ and let $E/\Q$ be an elliptic curve. Then 
$$
\Ent_{2,q}(E/\Q)\subseteq \{1,\Z/2\Z,\Z/3\Z, S_3\}.
$$
Furthermore, if $E$ has no $(2,q)$-entanglement over $\Q$ and $\rho_{E,q}$ is surjective  then 
$$
\Ent_{2,q}(E/\Q)=\begin{cases}\{1,\Z/2\Z,\Z/3\Z, S_3\}, &\text{if $\rho_{E,2}$ is surjective}\\
\{1,\Z/3\Z\}, &\text{if $\im \rho_{E,2}\cong \Z/3\Z$ }\\
\{1,\Z/2\Z\}, &\text{if $\im \rho_{E,2}\cong \Z/2\Z$ }\\
\{1\}, &\text{if $\rho_{E,2}$ is trivial.}
\end{cases}
$$
\end{theorem}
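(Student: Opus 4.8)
The plan is to treat the two assertions separately, and to reduce the whole statement to a single group-theoretic input about $\GL_2(\F_q)$. For the containment $\Ent_{2,q}(E/\Q)\subseteq\{1,\Z/2\Z,\Z/3\Z,S_3\}$, I would first note that for every $L/\Q$ the field $L_{2,q}=L(E[2])\cap L(E[q])$ is Galois over $L$ (an intersection of two Galois extensions) and sits inside the Galois extension $L(E[2])$. Hence $T_{2,q}(E/L)=\Gal(L_{2,q}/L)$ is a quotient of $\Gal(L(E[2])/L)$, which is isomorphic to a subgroup of $\Gal(\Q(E[2])/\Q)=\im\rho_{E,2}\le\GL_2(\F_2)\cong S_3$. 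A direct enumeration shows that every quotient of every subgroup of $S_3$ is isomorphic to one of $1,\Z/2\Z,\Z/3\Z,S_3$, which gives the first containment; applying the same enumeration to the specific subgroup $\im\rho_{E,2}$ yields the ``$\subseteq$'' half of each row of the table.

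For the reverse inclusions I would use the two standing hypotheses: since $T_{2,q}(E/\Q)=1$ and $\rho_{E,q}$ is surjective, the injection $\im\rho_{E,2q}\hookrightarrow\im\rho_{E,2}\times\im\rho_{E,q}$ is an isomorphism, so $\im\rho_{E,2q}\cong\im\rho_{E,2}\times\GL_2(\F_q)$ is a genuine direct product. The mechanism for realizing types is then uniform: for each subgroup $T'\le\im\rho_{E,2}$ that admits an embedding $\psi\colon T'\hookrightarrow\GL_2(\F_q)$, form the diagonal subgroup $H=\{(t,\psi(t)):t\in T'\}$ of $\im\rho_{E,2q}$. Writing $G_1=1\times\GL_2(\F_q)$ and $G_2=\im\rho_{E,2}\times 1$ for the mod $2$ and mod $q$ kernels, $H$ meets $G_1$ and $G_2$ trivially because the projections $H\to\im\rho_{E,2}$ and $H\to\GL_2(\F_q)$ are both injective. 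Taking $F_1=\Q(E[2])$, $F_2=\Q(E[q])$, and $L=\Q(E[2q])^{H}$, Lemma \ref{group-ent} shows $F_1$ and $F_2$ are entangleable by $L$ with $\Gal(LF_1/L)\cong H/(H\cap G_1)\cong T'$, and Proposition \ref{entprop1}(2) then gives $T_{2,q}(E/L)\cong T'$. (For the cyclic types one may instead invoke Theorem \ref{main1} directly, since $|\GL_2(\F_q)|=q(q-1)^2(q+1)$ is divisible by $6$ for every odd prime $q$, so $\ell\mid\gcd([\Q(E[2]):\Q],[\Q(E[q]):\Q])$ exactly when $\ell\mid|\im\rho_{E,2}|$.) The degenerate case $\im\rho_{E,2}=1$ is immediate and consistent: then $\Q(E[2])=\Q\subseteq L$ for every $L$, so $L(E[2])=L$ and $T_{2,q}(E/L)=1$ identically.

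The point that makes the two halves coincide is a combinatorial one: for subgroups of $S_3$, the collection of quotients of subgroups agrees with the collection of subgroups as isomorphism classes, the only ``new'' quotient $S_3/A_3\cong\Z/2\Z$ being itself a subgroup of $S_3$. Thus the upper bound equals the set of isomorphism classes of subgroups of $\im\rho_{E,2}$, and the diagonal construction realizes precisely those subgroup classes that embed into $\GL_2(\F_q)$. Reading off the subgroup classes of $S_3,\ \Z/3\Z,\ \Z/2\Z,\ 1$ reproduces exactly the four rows of the table, provided every subgroup of $\im\rho_{E,2}\le S_3$ does embed into $\GL_2(\F_q)$.

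The main obstacle is therefore the single input lemma that $\GL_2(\F_q)$ contains a copy of $S_3$ for every odd prime $q$ (which automatically gives embeddings of all of its subgroups). I expect to prove this uniformly using the $2$-dimensional standard representation of $S_3$ on $\{(x_1,x_2,x_3):\sum x_i=0\}\subseteq\F_q^{3}$: its kernel is a normal subgroup of $S_3$ on which a $3$-cycle already acts nontrivially, so the representation is faithful over every field and defines an embedding $S_3\hookrightarrow\GL_2(\F_q)$. The only delicate point is characteristic $3$, where this representation becomes reducible; there one checks that the $3$-cycle still maps to a nontrivial unipotent element (its minimal polynomial is $(\lambda-1)^2$ with the matrix not the identity), so faithfulness persists and the embedding survives for $q=3$ as well.
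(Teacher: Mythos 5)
Your proof is correct, and its core mechanism is the same as the paper's --- both realize the nontrivial entanglement types by exhibiting a subgroup $H$ of $\Gal(\Q(E[2q])/\Q)\cong\im\rho_{E,2}\times\GL_2(\F_q)$ meeting the two factor kernels trivially, and then applying Lemma \ref{group-ent} together with Proposition \ref{entprop1}(2) --- but you organize it differently at three points, and the comparison is instructive. First, for the containment $\Ent_{2,q}(E/\Q)\subseteq\{1,\Z/2\Z,\Z/3\Z,S_3\}$ the paper combines the divisibility bound of Proposition \ref{sizebound} with a quotient argument used only to exclude $\Z/6\Z$; you instead observe directly that $T_{2,q}(E/L)$ is a quotient of the subgroup $\Gal(L(E[2])/L)$ of $\im\rho_{E,2}\leq S_3$, which yields the global bound and all four row-by-row upper bounds in one stroke. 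Second, the paper obtains the cyclic types from Theorem \ref{main1} and reserves the diagonal construction for $S_3$ alone, whereas you run the diagonal construction uniformly over every subgroup $T'\leq\im\rho_{E,2}$ that embeds in $\GL_2(\F_q)$ (noting Theorem \ref{main1} as an alternative for the cyclic types); both work, with the small caveat that the diagonal construction applies only to $T'\neq 1$, since Lemma \ref{group-ent} requires $H\cap G_i\subsetneq H$ --- the trivial type is of course attained over $\Q(E[2q])$, as the paper notes in its introduction. Third, and most notably, your proof of the key input --- that $S_3$ embeds in $\GL_2(\F_q)$ for every odd prime $q$ --- via the standard two-dimensional representation, with the characteristic-$3$ degeneration checked separately, is sound, whereas the paper's Lemma \ref{S3lemma} offers explicit matrices $\sigma',\tau'$ that do not actually work: both are upper triangular and their product $\sigma'\tau'=\left(\begin{smallmatrix}-1&2\\0&-1\end{smallmatrix}\right)$ has order $2q$ rather than $3$ (indeed, for $q\equiv 2\pmod{3}$ the Borel subgroup of upper triangular matrices has order $q(q-1)^2$ prime to $3$, so it contains no copy of $S_3$ at all). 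Your representation-theoretic argument therefore not only proves the lemma but repairs a computational slip in the paper's own proof of it.
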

\begin{proof} 
Let $d=\gcd(\#\im\rho_{E,2},\#\im \rho_{E,q})$. Then 
$$
d\mid\gcd(\#\GL_2(\Z/2\Z),\#\GL_2(\Z/q\Z))=6.
$$
Proposition \ref{sizebound} therefore implies that the only possibilities for $(2,q)$-entanglement are
1, $\Z/2\Z$, $\Z/3\Z$, $S_3$ and $\Z/6\Z$. Type $\Z/6\Z$ cannot occur: if $T_{2,q}(E/L)=\Z/6\Z$ for some $L/\Q$ then, since $\Gal(L(E[2])/L)$ can be identified with the subgroup $\Gal(\Q(E[2])/\Q(E[2])\cap L)$  of $\Gal(\Q(E[2])/\Q)=S_3$, the inclusion $L\subseteq L_{2,q}\subseteq L(E[2])$ would force $S_3$ to have a subgroup with a $\Z/6\Z$ quotient, a contradiction. 

Now assume that $\rho_{E,q}$ is surjective and that $E$ has no $(2,q)$-entanglement over $\Q$. If $\rho_{E,2}$ is not surjective then $d=3,$ 2, or $1$, depending on whether the image of $\rho_{E,2}$ has order 3, 2, or 1, respectively. The result in these cases now follows from Theorem \ref{main1} and Proposition \ref{sizebound}.

If $\rho_{E,2}$ is surjective then $d=6$, so all $(2,q)$-entanglement types must have order dividing 6 by Proposition \ref{sizebound}. Clearly we can base extend to get trivial entanglement, and Theorem \ref{main1} yields entanglement types $\Z/2\Z$ and $\Z/3\Z$. It remains to show that $S_3\in \Ent_{2,q}(E/\Q)$. Since $\Q(E[pq])$ is the compositum of $\Q(E[2])$ and $\Q(E[q])$, and $E$ has no $(2,q)$-entanglement over $\Q$, we know that 
$$
\Gal(\Q(E[2q])/\Q)\cong\GL_2(\Z/2\Z)\times \GL_2(\Z/q\Z)
$$
By Lemma \ref{group-ent} (with $F=\Q(E[2q])$, $F_1=\Q(E[2])$, and $F_2=\Q(E[q])$) and Proposition \ref{entprop1}, it suffices to show that $\GL_2(\Z/2\Z)\times \GL_2(\Z/q\Z)$ contains a subgroup $H$ isomorphic to $S_3$ such that both $H\cap \GL_2(\Z/2\Z)\times 1$ and  $H\cap 1 \times \GL_2(\Z/q\Z)$ are trivial. This fact follows from Lemma \ref{S3lemma} below.
\end{proof}

\begin{lemma}\label{S3lemma} For all primes $q$, there exists a subgroup $H$ of $\GL_2(\Z/2\Z)\times \GL_2(\Z/q\Z)$ such that 
\begin{enumerate}
\item $H\cong S_3$, and 
\item $H$ intersects $\GL_2(\Z/2\Z)\times 1$ and $1\times \GL_2(\Z/q\Z)$ trivially.  
\end{enumerate}
\end{lemma}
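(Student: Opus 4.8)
The plan is to reduce the statement to the purely internal assertion that $\GL_2(\Z/q\Z)$ contains a subgroup isomorphic to $S_3$, and then to produce $H$ as a graph. First I would unwind condition (2). The intersection $H\cap(\GL_2(\Z/2\Z)\times 1)$ is exactly the kernel of the projection $\pi_2\colon H\to\GL_2(\Z/q\Z)$, and $H\cap(1\times\GL_2(\Z/q\Z))$ is the kernel of $\pi_1\colon H\to\GL_2(\Z/2\Z)$; so requiring both intersections to be trivial is the same as requiring both coordinate projections to be injective on $H$. Combined with $H\cong S_3$ and $\GL_2(\Z/2\Z)\cong S_3$, injectivity of $\pi_1$ forces $\pi_1|_H$ to be an isomorphism onto $\GL_2(\Z/2\Z)$. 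Hence any such $H$ is the graph
$$
H=\{(a,\phi(a))\mid a\in\GL_2(\Z/2\Z)\}
$$
of an injective homomorphism $\phi=\pi_2\circ(\pi_1|_H)^{-1}\colon\GL_2(\Z/2\Z)\to\GL_2(\Z/q\Z)$, and conversely the graph of any such $\phi$ satisfies (1) and (2). Thus the lemma is equivalent to the existence of an injection $\GL_2(\Z/2\Z)\hookrightarrow\GL_2(\Z/q\Z)$, i.e.\ to $\GL_2(\Z/q\Z)$ containing a copy of $S_3$.

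To build this copy uniformly in $q$, the idea is to start in characteristic zero and reduce. I would fix the integral standard representation $S_3\hookrightarrow\GL_2(\Z)$ given on generators by
$$
g=\begin{pmatrix} 0 & -1 \\ 1 & -1 \end{pmatrix},\qquad h=\begin{pmatrix} -1 & 1 \\ 0 & 1 \end{pmatrix},
$$
where one checks $g^3=h^2=I$ and $hgh^{-1}=g^{-1}$, so $\langle g,h\rangle\cong S_3$ over $\Z$. Reducing these matrices modulo $q$ gives a homomorphism $S_3\to\GL_2(\Z/q\Z)$, and it suffices to show this reduction remains injective for every prime $q$. Granting faithfulness, $\phi$ is obtained by composing a fixed isomorphism $\GL_2(\Z/2\Z)\xrightarrow{\sim}S_3$ with the reduced embedding, and taking the graph of $\phi$ finishes the proof.

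The one point requiring care, and the main (if modest) obstacle, is faithfulness of the reduction uniformly in $q$, including the small primes $q=2,3$ where the representation could a priori degenerate (e.g.\ $g$ becomes unipotent when $q=3$). I would handle this by a kernel argument rather than case analysis: the integral relations $g^3=h^2=I$ and $hgh^{-1}=g^{-1}$ persist after reduction, so the reduced images of $g$ and $h$ have orders dividing $3$ and $2$ respectively. Moreover the reduction of $g$ is never the identity matrix modulo $q$ (its entries $0,-1,1,-1$ cannot all match those of $I$ for any $q$), so the reduced $g$ has order exactly $3$. Therefore the kernel of the reduced representation is a normal subgroup of $S_3$ not containing the $3$-cycle, hence trivial, and the reduction is faithful for all $q$. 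This yields the desired $S_3\le\GL_2(\Z/q\Z)$ and completes the argument.
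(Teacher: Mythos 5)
Your proof is correct and follows essentially the same strategy as the paper's: produce a faithful copy of $S_3$ inside $\GL_2(\Z/q\Z)$ and take the graph (the paper's ``diagonal subgroup'') of the resulting isomorphism from $\GL_2(\Z/2\Z)$. Your write-up is more careful on both halves: you prove that condition (2) is \emph{equivalent} to $H$ being such a graph, and you verify faithfulness of the mod-$q$ reduction uniformly in $q$ via the kernel argument.

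In fact, your care pays off: the paper's explicit matrices are wrong as printed. Both $\sigma'=\begin{pmatrix}1&1\\0&-1\end{pmatrix}$ and $\tau'=\begin{pmatrix}-1&1\\0&1\end{pmatrix}$ are upper triangular, so $\sigma'\tau'=\begin{pmatrix}-1&2\\0&-1\end{pmatrix}$, which has order $2q$ for odd $q$ (and $\sigma'\equiv\tau'\equiv\begin{pmatrix}1&1\\0&1\end{pmatrix}$ when $q=2$); it never has order $3$, so $\langle\sigma',\tau'\rangle$ is not isomorphic to $S_3$ for any prime $q$. The intended generator is surely the transpose $\begin{pmatrix}1&0\\1&-1\end{pmatrix}$, and with that correction the paper's subgroup coincides with yours: your $h$ is the paper's $\tau'$, and your $hg=\begin{pmatrix}1&0\\1&-1\end{pmatrix}$ is the corrected $\sigma'$. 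So your proposal is not only correct but supplies the fix (together with the faithfulness check at small $q$) that the published proof needs.
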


\begin{proof} Fix generators $\sigma$ and $\tau$ of $S_3\cong  \GL_2(\Z/2\Z)$. The matrices
$$
\sigma' = \begin{pmatrix} 1 & 1 \\ 0& -1 \end{pmatrix} \quad \text{and}\quad
\tau'=\begin{pmatrix} -1 & 1 \\0 & 1 \end{pmatrix}
$$
generate a copy of $S_3$ inside $\GL_2(\Z/q\Z)$. (Both $\sigma'$ and $\tau'$ have order two, and their product has order 3.) The diagonal subgroup $H\subseteq\GL_2(\Z/2\Z)\times \GL_2(\Z/q\Z)$ generated by $(\sigma,\sigma')$ and $(\tau,\tau')$ now satisfies properties (1) and (2).
\end{proof}

\begin{example}\nf Let $E=X_0(11)$ (Cremona label \href{https://www.lmfdb.org/EllipticCurve/Q/11a1/}{\texttt{11a1}}). Using the LMFDB, one can check that $\rho_{E,2}$ is surjective and that the only primes $q<50$ which do not satisfy the hypotheses of Theorem \ref{2qclassification} are $q=5$ (exceptional) and $q=11$ (nontrivial $(2,11)$-entanglement). Thus
$$
\Ent_{2,q}(E/\Q)=\{1,\Z/2\Z,\Z/3\Z, S_3\}
$$
for all primes $q<50$ except $q=5,11$.
\end{example}

\begin{example}\nf Let $E/\Q$ be any elliptic curve with  no $(3,q)$-entanglement over $\Q$. If  $\rho_{E,3}$ and $\rho_{E,q}$ are surjective then 
$$
\Gal(\Q(E[3q])/\Q)\cong\GL_2(\Z/3\Z)\times \GL_2(\Z/q\Z).
$$
Using Lemma \ref{group-ent} (with $F=\Q(E[3q])$, $F_1=\Q(E[3])$ and $F_2=\Q(E[q])$) together with Proposition \ref{entprop1}(2), we use Magma 
to compute the following.
\begin{itemize}
\item For $q\in\{5,7,13\}$, $\Ent_{3,q}(E/\Q)$ contains the abelian groups $1$, $\Z/2\Z$, $\Z/3\Z$, $\Z/4\Z$, $\Z/2\Z\times \Z/2\Z$, $\Z/6\Z,$ $\Z/8\Z,$ and the nonabelian groups $S_3$, $D_4$, and $Q_8$.

\item  For $q\in\{11,17\}$, $\Ent_{3,q}(E/\Q)$ contains the abelian groups $1$, $\Z/2\Z$, $\Z/3\Z$, $\Z/4\Z$, $\Z/2\Z\times \Z/2\Z$, $\Z/6\Z,$ $\Z/8\Z,$ and the nonabelian groups $S_3$, $D_4$, $Q_8$, $D_6$, $SD_{16}$, $\SL_2(\Z/3\Z)$, and $\GL_2(\Z/3\Z)$.
\end{itemize}
\end{example}

\end{document}